\newtheorem{thm}{Theorem}[section]
\newtheorem{prop}[thm]{Proposition}
\newtheorem{cor}[thm]{Corollary}
\newtheorem{lem}[thm]{Lemma}
\newtheorem*{thmnn}{Theorem}
\theoremstyle{remark}
\newtheorem{rmk}[thm]{Remark}
\theoremstyle{definition}
\begin{document}

\title{Seesaw Identities and Theta Contractions with Generalized Theta Functions, and Restrictions of Theta Lifts}

\author{Shaul Zemel}

\maketitle

\section*{Introduction}

Theta lifts from elliptic modular forms to various types of functions on Grassmannians (or symmetric spaces of orthogonal groups) were defined systematically in \cite{[B]}. They have numerous applications, including in Number Theory and Algebraic Geometry (e.g., the Kudla program). These come from the Hermitian structure on these Grassmannians when the signature is $(2,n)$ (or $(n,2)$), but the lifts are defined for arbitrary signatures.

A primitive non-degenerate sub-lattice $M$ of a fixed lattice $L$ yields, when complemented with an element of the Grassmannian of $M^{\perp}$, an embedding from the Grassmannian of $M$ into that $L$. One may therefore consider the restriction of functions on the Grassmannian of $L$ to that of $M$ (perhaps modified to avoid global singularities). When both Grassmannians are Hermitian, these restrictions were considered in \cite{[M]} for Borcherds products. This reference shows that after removing the global singularities, the restriction of the Borcherds product associated with a modular form $F$ is the one associated with the \emph{theta contraction} of $F$, which is formed from $F$ using the theta function associated with the theta function associated with the (positive definite) orthogonal complement $M^{\perp}$. However, the proof there is based on analyzing divisors of Borcherds products, and therefore require the assumption that the Koecher principle holds.

\smallskip

The main goal of this paper is to consider theta lifts and their restrictions, which is done in the generality of the basic constructions from \cite{[B]}. Along the way we generalize the seesaw identity to theta functions with polynomials, and also find a way to express the theta contractions from \cite{[M]} (generalized to this setting as well) as a pairing of $F$ with a slightly modified theta function. The latter function appears, in some particular settings, already in \cite{[B]} and \cite{[BZ]}, where it is claimed to be a modular form. We prove this property in general here. In fact, we establish the transformation formula involving the two additional parameters $\alpha$ and $\beta$ from \cite{[B]}, which can be related to Jacobi forms with lattice indices that are not necessarily positive definite. We study the latter objects at a subsequent publication.

The main result about restrictions of theta lifts is the following one.
\begin{thmnn}
Let $M$ be a primitive non-degenerate sub-lattice $M$ of a lattice $L$, take an element of the Grassmannian of $M^{\perp}$, and use it to embed the Grassmannian of $M$ into that of $L$. Denote by $\Phi_{L}$ the function on the Grassmannian of $L$ arising as the theta lift of a modular form $F$ and representation $\rho_{L}^{*}$ using a theta function with a polynomial. Then if we subtract the singular terms of $\Phi_{L}$ that cover the Grassmannian of $M^{\perp}$ completely, then the restriction of the resulting function to the Grassmannian of $M$ is the theta lift $\Phi_{M}$ of the theta contraction of $F$ with the appropriate theta function associated with $M^{\perp}$.
\end{thmnn}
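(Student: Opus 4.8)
The plan is to deduce the identity from the generalized seesaw for theta functions with polynomials, and then to analyze which terms of the Siegel theta series of $L$ are responsible for the singularities along the embedded Grassmannian of $M$.

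First I would set up notation: write $L_{\mathbb{R}}=L\otimes\mathbb{R}=M_{\mathbb{R}}\oplus M_{\mathbb{R}}^{\perp}$ as an orthogonal direct sum, with $M\oplus M^{\perp}$ of finite index in $L$, denote the chosen point of the Grassmannian of $M^{\perp}$ by $w_{0}$, and describe the embedding $v\mapsto v\oplus w_{0}$ of the Grassmannian of $M$ into that of $L$. Since polynomials on a product of real quadratic spaces are spanned by tensor products, and since the theta lift is linear in the polynomial, I may assume $p=p_{M}\otimes p_{M^{\perp}}$ decomposes along $M_{\mathbb{R}}\oplus M_{\mathbb{R}}^{\perp}$; because $w_{0}$ is fixed, the factor $\Theta_{M^{\perp}}\bigl(\tau,w_{0};p_{M^{\perp}}\bigr)$ --- with the shift parameters $\alpha,\beta$ likewise decomposed, their $M^{\perp}$-components attached here --- is a fixed $\mathbb{C}[M^{\perp\prime}/M^{\perp}]$-valued modular form, namely (up to the normalization of the modified theta function treated earlier in the paper) the theta function of $M^{\perp}$ occurring in the theta contraction. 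The generalized seesaw identity then yields, on the image of the embedding,
\[
\Theta_{L}\bigl(\tau,v\oplus w_{0};p\bigr)=\iota\Bigl(\Theta_{M}\bigl(\tau,v;p_{M}\bigr)\otimes\Theta_{M^{\perp}}\bigl(\tau,w_{0};p_{M^{\perp}}\bigr)\Bigr),
\]
where $\iota\colon\mathbb{C}[M'/M]\otimes\mathbb{C}[M^{\perp\prime}/M^{\perp}]\to\mathbb{C}[L'/L]$ is the natural map of Weil representation spaces attached to $M\oplus M^{\perp}\subseteq L$.

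Substituting this into the regularized integral defining $\Phi_{L}(v\oplus w_{0})$ and transferring $\iota$ to its adjoint on the $F$-side, the inner pairing becomes the pairing of $F$ against $\overline{\Theta_{M}\otimes\Theta_{M^{\perp}}}$; contracting the $\Theta_{M^{\perp}}$-tensor factor into $F$ is precisely the theta contraction of $F$ with the appropriate theta function associated with $M^{\perp}$, which by the transformation formula proved earlier in the paper is a modular form for $\rho_{M}^{*}$. Hence, \emph{formally},
\[
\Phi_{L}(v\oplus w_{0})=\int_{\mathcal{F}}\Bigl\langle (\text{theta contraction of }F)(\tau),\,\overline{\Theta_{M}(\tau,v;p_{M})}\Bigr\rangle\,d\mu(\tau)=\Phi_{M}(v),
\]
so that all the content of the theorem lies in making this manipulation legitimate. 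Borcherds' auxiliary variable $s$ is carried through unchanged, and its analytic continuation on the $L$-side must be matched with the one on the $M$-side.

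The main obstacle is exactly this convergence issue, and it is where the hypothesis on subtracting singular terms enters. I would expand the sum over $L'$ defining $\Theta_{L}$, regroup it through $\iota$ as a double sum over cosets in $M'$ and in $M^{\perp\prime}$, and isolate the sub-sum indexed by vectors with vanishing $M$-component (those lying in $M_{\mathbb{R}}^{\perp}$, and for the relevant negative-norm ones orthogonal to $w_{0}$): these are exactly the lattice vectors orthogonal to every $v$ in the image of the embedding, hence the ones whose contribution to $\Phi_{L}$ is singular \emph{all along} the embedded Grassmannian --- the ``singular terms that cover the Grassmannian of $M^{\perp}$ completely'' of the statement. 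After removing their contribution the remaining integrand decomposes termwise according to the seesaw, the interchange of the $s$-regularization with the decomposition is justified term by term as in \cite{[B]}, and the resulting regularized integral is recognized as $\Phi_{M}$ of the theta contraction. The delicate point is to show that the discarded terms are \emph{precisely} those covering the embedded Grassmannian --- neither more nor fewer --- and that the regularized values thereby match on the nose; granting this, the remainder is bookkeeping with the intertwiner $\iota$, the polynomial splitting $p=p_{M}\otimes p_{M^{\perp}}$, and the distribution of the parameters $\alpha$ and $\beta$ between the two factors.
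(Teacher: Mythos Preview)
Your algebraic manipulation is essentially the paper's argument: the seesaw identity (Proposition~\ref{sepTheta} / Proposition~\ref{pairTheta}) together with the adjunction between $\iota$ and its dual (Lemma~\ref{pairprop}) shows that the integrand defining $\Phi_{L}(v\oplus w_{0})$ equals, pointwise in $\tau$, the integrand defining $\Phi_{M}$ applied to the theta contraction. Up to the choice of pairing convention (your $\iota$/adjoint versus the paper's $\downarrow$/$\uparrow$), this is the same.

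Where you diverge is in locating the difficulty. You claim the ``main obstacle'' is convergence, and propose to isolate and remove the terms with vanishing $M$-component before justifying the interchange. This is a misreading of the regularization. The regularized theta lift is defined by integrating the \emph{full} integrand over $\mathcal{F}_{Y}$, taking $Y\to\infty$ to get a meromorphic function of $s$, and applying $\operatorname{CT}_{s=0}$. Since the two integrands are literally equal as functions of $\tau$ (for each fixed $v=u\oplus w_{0}$), every step of this procedure gives identical results on both sides; there is nothing further to justify. The paper's proof of Theorem~\ref{reslift} is accordingly one short paragraph: integrands match, regularization is the same operation, done.

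The sentence about ``subtracting singular terms'' in the theorem statement is not part of the proof mechanism; it is an interpretive remark made \emph{after} the proof. The equality $\Phi_{L}(F;u\oplus w_{0},p_{v})=\Phi_{M}(\Theta_{(L,M)}(F);u,p_{u})$ holds as stated, including at points where $\Phi_{L}$ is singular (the regularization assigns a value there). The subtraction enters only if one wants to realize $\Phi_{M}$ as the restriction of a function that is \emph{smooth} near the embedded Grassmannian, which is a separate, optional modification of $\Phi_{L}$. Your proposed term-by-term removal and subsequent convergence analysis is therefore unnecessary, and attempting to carry it out rigorously would likely lead you into genuine difficulties (matching analytic continuations after excising terms) that the actual proof avoids entirely.
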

This result holds for all the possible signatures, both of $L$ and of $M$, and depends on the modularity of the modified theta functions used in the definition of the theta contraction. It is obtained by comparing the pairing appearing in the definition of the theta lift using the expression coming from the theta contraction. In the case where both Grassmannians are Hermitian, the relation between theta lifts and Borcherds products (which lies in the heart of the definition of the latter functions in \cite{[B]}) reproduces the result of \cite{[M]}, independently of the Koecher principle.

In another direction, it is known that several lifts, like those of Shimura, Doi--Naganuma, Maa\ss, and others, can be viewed as theta lifts (still in the Hermitian setting but now with a non-trivial polynomial). Section 14 of \cite{[B]} extends these lifts to go from nearly holomorphic modular forms to produce meromorhpic automorphic forms with poles along special cycles (see \cite{[LZ]} for the exact description in the Shimura lift case). Some results about restrictions between such lifts are known, at least in the smooth case (see, e.g., \cite{[V1]}). Our main result generalizes these relations, and extends them to the meromorphic case. More explicitly, assume that $\Phi_{L}$ is the theta lift of the weakly holomorphic modular form $F$ of weight $1-\frac{n}{2}+m$, which is an automorphic form of weight $m$ on the $n$-dimensional Grassmannian of $L$, and take a primitive sub-lattice $M$ yielding a sub-Grassmannian of dimension $l$. Then by subtracting all the poles of $\Phi_{L}$ that contain the Grassmannian of $M$ we obtain a function whose restriction to the latter Grassmannian is the theta lift of the weakly holomorphic theta contraction of $F$, of weight $1-\frac{l}{2}+m$. Note that this result cannot be obtained by local considerations of $\Phi_{L}$, since in general its main terms there will be the poles that we subtract before restricting.

\smallskip

The paper is divided into two sections. Section \ref{DecTheta} proves the formulae involving theta functions (including the modularity of the modified ones), while Section \ref{ResLift} considers the pairings, proves the main result, and gives some elementary examples.

\section{Decomposing Generalized Theta Functions \label{DecTheta}}

Let $L$ be an even lattice of signature $(b_{+},b_{-})$. For $\lambda$ and $\mu$ in $L$ we write their pairing as $(\lambda,\mu)$, and after we shorten $(\lambda,\lambda)$ to $\lambda^{2}$, the quadratic form on $L$ takes $\lambda$ to $\frac{\lambda^{2}}{2}$ (with is in $\mathbb{Z}$ since $L$ is even). The the \emph{dual lattice} \[L^{*}:=\operatorname{Hom}(L,\mathbb{Z})=\{\nu \in L_{\mathbb{R}}|(\nu,L)\subseteq\mathbb{Z}\} \subseteq L_{\mathbb{Q}} \subseteq L_{\mathbb{R}}\] contains $L$, and the quotient $D_{L}:=L^{*}/L$, called the \emph{discriminant group} of $L$, is finite. The discriminant group $D_{L}$ carries non-degenerate $\mathbb{Q}/\mathbb{Z}$-valued bilinear and quadratic forms, for which we carry over the notation from $L$.

The group $\operatorname{O}(L)$ of automorphisms of $L$ is a discrete subgroup of the Lie group $\operatorname{O}(L_{\mathbb{R}})\cong\operatorname{O}(b_{+},b_{-})$, and we denote by $\operatorname{SO}^{+}(L)$ the intersection of $\operatorname{O}(L)$ with the connected component $\operatorname{SO}^{+}(L_{\mathbb{R}})$ of this Lie group. The group $\operatorname{SO}^{+}(L)$ preserves $L^{*}$ hence acts on $D_{L}$, and the \emph{discriminant kernel}, or \emph{stable orthogonal group} of $L$, is \[\Gamma_{L}:=\ker\big(\operatorname{SO}^{+}(L)\to\operatorname{O}(D_{L})\big)=\big\{\mathcal{A}\in\operatorname{SO}^{+}(L)\big|\mathcal{A}\lambda-\lambda \in L\ \forall\lambda \in L^{*}\big\}.\]

The symmetric space of the Lie group $\operatorname{O}(L_{\mathbb{R}})$, or of its connected component $\operatorname{SO}^{+}(L_{\mathbb{R}})$, it the \emph{Grassmannian} of $L_{\mathbb{R}}$, which is denoted by
\begin{equation}
\operatorname{Gr}(L_{\mathbb{R}}):=\big\{L_{\mathbb{R}}=v_{+} \oplus v_{-}\big|v_{+}\gg0,\ v_{-}\ll0,\ v_{+} \perp v_{-}\big\}. \label{Grassdef}
\end{equation}
It is clear from the definition in Equation \eqref{Grassdef} that $\dim v_{\pm}=b_{\pm}$, and that each one of these spaces determines the other as its orthogonal complement. This is related to $\operatorname{Gr}(L_{\mathbb{R}})$ being a (connected) real manifold of dimension $b_{+}b_{-}$. For $v\in\operatorname{Gr}(L_{\mathbb{R}})$ and $\lambda \in L_{\mathbb{R}}$ we write $\lambda_{v_{\pm}}$ for the orthogonal projection of $\lambda$ onto the corresponding space.

\smallskip

The group $\operatorname{SL}_{2}(\mathbb{R})$ acts on the upper half-plane $\mathcal{H}:=\{\tau=x+iy\in\mathbb{C}|y>0\}$ via fractional linear transformations: $A=\binom{a\ \ b}{c\ \ d}$ takes $\tau\in\mathcal{H}$ to $A\tau:=\frac{a\tau+b}{c\tau+d}$, with the factor of automorphy $j(A,\tau):=c\tau+d$ which satisfies \[j(AB,\tau)=j(A,B\tau)j(B,\tau).\] This group admits a unique non-trivial double cover \[\operatorname{Mp}_{2}(\mathbb{R}):=\big\{(A,\phi)\big|A\in\operatorname{SL}_{2}(\mathbb{R}),\ \phi:\mathcal{H}\to\mathbb{C}\text{ holomorphic},\ \phi^{2}(\tau)=j(A,\tau)\big\}\]. The product rule is defined by \[(A,\phi)\cdot(B,\psi):=\big(AB,(\phi \circ B)\cdot\psi\big).\]

The inverse image of the discrete subgroup $\operatorname{SL}_{2}(\mathbb{Z})$ in $\operatorname{Mp}_{2}(\mathbb{R})$ is denoted by $\operatorname{Mp}_{2}(\mathbb{Z})$. It is generated by \[T:=\bigg(\binom{1\ \ 1}{0\ \ 1},1\bigg)\qquad\text{and}\qquad S:=\bigg(\binom{0\ \ -1}{1\ \ \ 0\ },\sqrt{\tau}\in\mathcal{H}\bigg),\] with the relations $S^{2}=(ST)^{3}=Z:=(-I,i)$ and $Z^{4}$ is trivial. Given a lattice $L$ as above (or just its discriminant form $D_{L}$), there is a representation $\rho_{L}$ of $\operatorname{Mp}_{2}(\mathbb{Z})$ on the space $\mathbb{C}[D_{L}]$, called the \emph{Weil representation} $\rho_{L}$ that is associated with $L$. It is defined, in the natural basis $\{\mathfrak{e}_{\gamma}\}_{\gamma \in D_{L}}$ of $\mathbb{C}[D_{L}]$, on the generators $T$ and $S$ of $\operatorname{Mp}_{2}(\mathbb{Z})$, by the formulae
\begin{equation}
\rho_{L}(T)\mathfrak{e}_{\gamma}=\mathbf{e}\big(\tfrac{\gamma^{2}}{2}\big)\mathfrak{e}_{\gamma}\qquad\text{and}\qquad\rho_{L}(S)\mathfrak{e}_{\gamma}=\frac{\mathbf{e}\big(\frac{b_{-}-b_{+}}{8}\big)}{\sqrt{|D_{L}|}}\sum_{\delta \in D_{L}}\mathbf{e}\big(-(\gamma,\delta)\big)\mathfrak{e}_{\delta}, \label{Weildef}
\end{equation}
where we write $\mathbf{e}(w)$ for $e^{2\pi iw}$ for every $w$ in $\mathbb{C}$ or in $\mathbb{C}/\mathbb{Z}$. For the action of a general element of $\operatorname{Mp}_{2}(\mathbb{Z})$ under the representation $\rho_{L}$ from Equation \eqref{Weildef} see \cite{[Sch]}, \cite{[Str]}, or \cite{[Z]}.

\smallskip

Consider now an element $v\in\operatorname{Gr}(L_{\mathbb{R}})$. A polynomial $p_{v}$ on $\operatorname{Gr}(L_{\mathbb{R}})$ is called \emph{homogenous of degree $(m_{+},m_{-})$ with respect to $v$} if for every $\lambda \in L_{\mathbb{R}}$ and two constants $c_{\pm}$ we have the equality $p(c_{+}\lambda_{v_{+}}+c_{-}\lambda_{v_{-}})=c_{+}^{m_{+}}c_{-}^{m_{-}}p(\lambda)$ (i.e., in coordinates for $v_{\pm}$ combining to coordinates for $L_{\mathbb{R}}$ and the corresponding variables on $L_{\mathbb{R}}$, the polynomial $p$ is homogenous of degree $m_{\pm}$ in the variables associated with $v_{\pm}$). Let $\Delta_{v}$ be the Laplacian operator on $L_{\mathbb{R}}$ that is associated with the positive definite pairing (or \emph{majorant}) corresponding to $v$ (i.e., in variables as in the last remark it looks like $\sum_{i=1}^{b_{+}+b_{-}}\frac{\partial^{2}}{\partial x_{i}^{2}}$). Note that this is \emph{not} the natural Laplacian on $L_{\mathbb{R}}$ that is invariant under the action of $\operatorname{O}(L_{\mathbb{R}})$. For any $c\in\mathbb{R}$ we define the operator $e^{c\Delta_{v}}$ by the usual series $\sum_{j=0}^{\infty}\frac{c^{j}}{j!}\Delta_{v}^{j}$. Its operation on any polynomial $p$ on $\operatorname{Gr}(L_{\mathbb{R}})$ is well-defined (with no convergence issues), since $\Delta_{v}^{j}p=0$ for large enough $j$.

Take now $\tau=x+iy\in\mathcal{H}$, $v\in\operatorname{Gr}(L_{\mathbb{R}})$, and a polynomial $p_{v}$ on $L_{\mathbb{R}}$, which we assume to be homogenous of degree $(m_{+},m_{-})$ with respect to $v$. The \emph{vector-valued Siegel theta function} of $L$, with the polynomial $p_{v}$ at $v$, is
\begin{equation}
\Theta_{L}(\tau;v,p_{v}):=y^{\frac{b_{-}}{2}+m_{-}}\sum_{\lambda \in L^{*}}e^{-\frac{\Delta_{v}}{8\pi y}}(p)(\lambda)\mathbf{e}\bigg(\tau\frac{\lambda_{v_{+}}^{2}}{2}+\overline{\tau}\frac{\lambda_{v_{-}}^{2}}{2}\bigg)\mathfrak{e}_{\lambda+L}. \label{Thetadef}
\end{equation}
In fact, this function can be extended by adding two variables $\alpha$ and $\beta$ from $L_{\mathbb{R}}$, organized as a column vector $\binom{\alpha}{\beta} \in L_{\mathbb{R}}^{2}$, and consider the \emph{generalized Siegel theta function} $\Theta_{L}\big(\tau;\binom{\alpha}{\beta};v,p_{v}\big)$, which is defined to be
\begin{equation}
y^{\frac{b_{-}}{2}+m_{-}}\sum_{\lambda \in L^{*}}e^{-\frac{\Delta_{v}}{8\pi y}}(p_{v})(\lambda+\beta)\mathbf{e}\bigg(\tau\frac{(\lambda+\beta)_{v_{+}}^{2}}{2}+\overline{\tau}\frac{(\lambda+\beta)_{v_{-}}^{2}}{2}-\big(\lambda+\tfrac{\beta}{2},\alpha\big)\bigg)\mathfrak{e}_{\lambda+L}. \label{Thetagen}
\end{equation}
Recalling that $\operatorname{SL}_{2}(\mathbb{Z})\subseteq\operatorname{SL}_{2}(\mathbb{R})$, and with it $\operatorname{Mp}_{2}(\mathbb{Z})\subseteq\operatorname{Mp}_{2}(\mathbb{R})$, has a natural action on the column vectors $L_{\mathbb{R}}^{2}$ (essentially since the latter space is $\mathbb{R}^{2}\otimes_{\mathbb{R}}L_{\mathbb{R}}$, and the action is the usual one on the first component), Theorem 4.1 of \cite{[B]} establishes the following property of the theta function from Equation \eqref{Thetagen}.
\begin{thm}
For every $\tau\in\mathcal{H}$, $\binom{\alpha}{\beta} \in L_{\mathbb{R}}^{2}$, $v\in\operatorname{Gr}(L_{\mathbb{R}})$, polynomial $p_{v}$ that is homogenous of degree $(m_{+},m_{-})$ with respect to $v$, and $(A,\phi)\in\operatorname{Mp}_{2}(\mathbb{Z})$ we have the equality \[\Theta_{L}\big(A\tau;\textstyle{A\binom{\alpha}{\beta}};v,p_{v}\big)=\phi(\tau)^{b_{+}-b_{-}+2m_{+}-2m_{-}}\rho_{L}\big((A,\phi)\big)\Theta_{L}\big(\tau;\textstyle{\binom{\alpha}{\beta}};v,p_{v}\big).\] \label{modTheta}
\end{thm}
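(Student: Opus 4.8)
The plan is to reduce the statement to the two generators $T$ and $S$ of $\operatorname{Mp}_{2}(\mathbb{Z})$, since the desired identity is a cocycle-type relation: if it holds for $(A,\phi)$ and $(B,\psi)$ then, using the cocycle property $j(AB,\tau)=j(A,B\tau)j(B,\tau)$ of the factor of automorphy, the corresponding compatibility of the metaplectic product rule $\phi\mapsto(\phi\circ B)\cdot\psi$, the homomorphism property of $\rho_{L}$, and the fact that the $\operatorname{Mp}_{2}(\mathbb{Z})$-action on $L_{\mathbb{R}}^{2}$ is a genuine left action, it holds for $(A,\phi)(B,\psi)$. One must also check that the relations $S^{2}=(ST)^{3}=Z$ and $Z^{4}=1$ are respected, but this is automatic once both generator identities are verified, because $\rho_{L}$ is already known to be a well-defined representation and the weight exponent $b_{+}-b_{-}+2m_{+}-2m_{-}$ is a fixed integer. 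So the real content is the two base cases.

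For $T=\big(\binom{1\ 1}{0\ 1},1\big)$ one has $A\tau=\tau+1$, so $y$ is unchanged, the exponents and the operator $e^{-\Delta_{v}/8\pi y}$ are unchanged, and the action on $\binom{\alpha}{\beta}$ sends it to $\binom{\alpha+\beta}{\beta}$. I would substitute directly into \eqref{Thetagen}: the quadratic exponent becomes $(\tau+1)\frac{(\lambda+\beta)_{v_{+}}^{2}}{2}+(\overline{\tau}+1)\frac{(\lambda+\beta)_{v_{-}}^{2}}{2}$, which differs from the $\tau$-version by $\mathbf{e}\big(\tfrac{(\lambda+\beta)_{v_{+}}^{2}+(\lambda+\beta)_{v_{-}}^{2}}{2}\big)$. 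One then has to combine this with the change in the linear term, where $\alpha$ is replaced by $\alpha+\beta$, producing an extra factor $\mathbf{e}\big(-(\lambda+\tfrac{\beta}{2},\beta)\big)=\mathbf{e}\big(-\tfrac{(\lambda+\beta)^{2}}{2}+\tfrac{\lambda^{2}}{2}\big)$; here $(\lambda+\beta)^{2}=(\lambda+\beta)_{v_{+}}^{2}+(\lambda+\beta)_{v_{-}}^{2}$ since $v_{+}\perp v_{-}$, so the two contributions telescope and leave exactly $\mathbf{e}\big(\tfrac{\lambda^{2}}{2}\big)=\mathbf{e}\big(\tfrac{(\lambda+L)^{2}}{2}\big)$ (using $\lambda\in L^{*}$ and evenness of $L$ to see this depends only on the coset). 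Comparing with \eqref{Weildef}, this is precisely the action of $\rho_{L}(T)$ on $\mathfrak{e}_{\lambda+L}$, and $\phi=1$ gives the trivial weight factor, so the $T$-identity holds.

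The $S$-case is the main obstacle and is essentially a Poisson summation argument. Here $A\tau=-1/\tau$, so $\operatorname{Im}(A\tau)=y/|\tau|^{2}$, $\phi(\tau)=\sqrt{\tau}$, and $S\binom{\alpha}{\beta}=\binom{-\beta}{\alpha}$. Writing the left side $\Theta_{L}\big(-1/\tau;\binom{-\beta}{\alpha};v,p_{v}\big)$ as a sum over $L^{*}$, I would first absorb the Gaussian-type operator $e^{-\Delta_{v}/8\pi y}$ by recognizing (as in \cite{[B]}) that $e^{-\Delta_{v}/8\pi y}(p_{v})(\lambda)\mathbf{e}\big(iy(\lambda_{v_{+}}^{2}+\lambda_{v_{-}}^{2})\big)$ type expressions are Schwartz functions on $L_{\mathbb{R}}$ to which Poisson summation applies, with the homogeneity degree $(m_{+},m_{-})$ of $p_{v}$ controlling the power of $\tau$ and $\overline{\tau}$ that emerges (each $v_{+}$-variable contributes a factor $\sqrt{\tau}$ under Fourier transform relative to the majorant-Gaussian and each $v_{-}$-variable a factor $\sqrt{\overline{\tau}}$, via the standard Gaussian-times-polynomial Fourier transform on $\mathbb{R}^{b_{+}}\times\mathbb{R}^{b_{-}}$). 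Carefully tracking these gives the total weight factor $\sqrt{\tau}^{\,b_{+}+2m_{+}}\sqrt{\overline{\tau}}^{\,b_{-}+2m_{-}}$; combined with the mismatch $y^{(b_{-}/2)+m_{-}}$ versus $(y/|\tau|^{2})^{(b_{-}/2)+m_{-}}$ between the two $\Theta$ normalizations, this collapses to $\sqrt{\tau}^{\,b_{+}-b_{-}+2m_{+}-2m_{-}}=\phi(\tau)^{b_{+}-b_{-}+2m_{+}-2m_{-}}$, as required. Poisson summation over $L^{*}$ converts the sum into one over the dual of $L^{*}$, namely $L$, and resumming over cosets in $D_{L}=L^{*}/L$ produces the character sum $\frac{1}{\sqrt{|D_{L}|}}\sum_{\delta}\mathbf{e}(-(\gamma,\delta))$ together with the eighth-root-of-unity prefactor $\mathbf{e}\big(\tfrac{b_{-}-b_{+}}{8}\big)$ coming from the signature in the Gaussian Fourier transform — exactly the matrix of $\rho_{L}(S)$ from \eqref{Weildef}. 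The bookkeeping of the $\alpha,\beta$ shifts under Poisson summation — the phase $\mathbf{e}\big(-(\lambda+\tfrac{\beta}{2},\alpha)\big)$ and the translation by $\beta$ inside $p_{v}$ and inside the quadratic form — is what makes $\binom{\alpha}{\beta}\mapsto\binom{-\beta}{\alpha}$ appear correctly, and verifying that these match up is the delicate part; once this is done for $S$ and $T$ the cocycle bootstrap finishes the proof. Since this is Theorem 4.1 of \cite{[B]} in the case $\binom{\alpha}{\beta}=0$, I would in practice cite that computation for the Gaussian/Poisson core and supply only the additional steps tracking the $\alpha$ and $\beta$ dependence.
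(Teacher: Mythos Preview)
Your sketch is correct and follows the standard route (reduce to generators, verify $T$ by direct substitution, and handle $S$ via Poisson summation on the Gaussian-times-polynomial Schwartz function), which is precisely the argument in \cite{[B]}. However, the paper does not prove this statement at all: it simply quotes it as Theorem~4.1 of \cite{[B]}. One small correction to your final remark: Theorem~4.1 of \cite{[B]} already treats the full generality with the parameters $\alpha$ and $\beta$ (not only the case $\alpha=\beta=0$), so no additional bookkeeping beyond the citation is required; your proposed ``additional steps tracking the $\alpha$ and $\beta$ dependence'' are already contained in Borcherds' proof.
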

Since for $\alpha=\beta=0$ both theta functions reduce to the ones from Equation \eqref{Thetadef}, this case of Theorem \ref{modTheta} is the statement that for fixed such $v$ and $p_{v}$, the map taking $\tau\in\mathcal{H}$ to $\Theta_{L}(\tau;v,p_{v})$ is a (typically non-holomorphic) modular form of weight $\frac{b_{+}-b_{-}}{2}+m_{+}-m_{-}$ and representation $\rho_{L}$. When the polynomial $p$ is constant, the theta function (also the generalized one) are invariant under the action of $\Gamma_{L}$. However, for non-trivial polynomials, the behavior under the action of $\Gamma_{L}$ depends on the relation between the polynomials $p_{v}$ and $p_{\mathcal{A}v}\circ\mathcal{A}$ for $v\in\operatorname{Gr}(L_{\mathbb{R}})$ and $\mathcal{A}\in\Gamma_{L}$, which gives good modularity properties only in very special cases. Finally, the functions from Equations \eqref{Thetadef} and \ref{Thetagen} are holomorphic in $\tau$ precisely when $L$ is positive definite (so that $b_{-}=0$, $m_{-}=0$, and $\operatorname{Gr}(L_{\mathbb{R}})$ is a single point, with the associated Laplacian being the one of $L_{\mathbb{R}}$ itself), and the homogenous polynomial $p$ on $L_{\mathbb{R}}$ is harmonic (with respect to the canonical operator $\Delta_{v}$).

\medskip

In the classical case of $p=1$, the seesaw identity expresses $\Theta_{L}(\tau;v,1)$ using the theta functions associated with a primitive non-degenerate sub-lattice $M$ and its orthogonal complement, provided that $v$ is related to the presentation of $L_{\mathbb{R}}$ as $M_{\mathbb{R}} \oplus M_{\mathbb{R}}^{\perp}$ in a manner that we will soon define precisely. The basic idea is that if $L$ is the direct sum of $M$ and $M^{\perp} \cap L$ then $\Theta_{L}$ is the tensor product of the theta functions of the two sub-lattices, and otherwise it is obtained from this tensor product by a simple operation. We first generalize these formulae.

\smallskip

Consider thus a primitive non-degenerate sub-lattice $M$ of $L$, of some signature $(c_{+},c_{-})$ (where $0 \leq c_{\pm} \leq b_{\pm}$, of course), and denote the primitive sub-lattice $M^{\perp} \cap L$, of signature $(b_{+}-c_{+},b_{-}-c_{-})$, by $M^{\perp}_{L}$. We assume that the spaces $v_{\pm}$ associated with our element $v\in\operatorname{Gr}(L_{\mathbb{R}})$ intersect $M_{\mathbb{R}}$ in spaces $u_{\pm}$ of maximal dimension $c_{\pm}$, so that this restriction yields an element $u\in\operatorname{Gr}(M_{\mathbb{R}})$. This happens for the two spaces $v_{\pm}$ simultaneously, and is equivalent to the dimension of the intersection $u^{\perp}_{\pm}$ of $v_{\pm}$ with $M_{\mathbb{R}}^{\perp}$ having the maximal dimension $b_{\pm}-c_{\pm}$, determining an element $u^{\perp}\in\operatorname{Gr}(M_{\mathbb{R}})$. We denote this situation as $v=u \oplus u^{\perp}$, meaning that $v$ is the \emph{direct sum} of $u$ and $u^{\perp}$. Moreover, we assume that the polynomial $p_{v}$ is the product of polynomials $p_{u}$ and $p_{u^{\perp}}$, the former homogenous of degree $(n_{+},n_{-})$ with respect to $u$, so that the latter is homogenous of degree $(m_{+}-n_{+},m_{-}-n_{-})$ with respect to $u^{\perp}$. Then we have the following result, which was already used in many different references (e.g., \cite{[E]} and \cite{[M]}) in the case where $p=1$ and $\alpha=\beta=0$.
\begin{lem}
If $L$ is the direct sum $M \oplus M^{\perp}_{L}$ and $\alpha$ and $\beta$ are in $L_{\mathbb{R}}$ then we have, under the assumptions that $v=u \oplus u^{\perp}$ and $p_{v}=p_{u}p_{u^{\perp}}$, the equality \[\Theta_{L}\bigg(\tau;\binom{\alpha}{\beta};v,p_{v}\bigg)=\Theta_{M}\bigg(\tau;\binom{\alpha_{M_{\mathbb{R}}}}{\beta_{M_{\mathbb{R}}}};u,p_{u}\bigg)\otimes \Theta_{M^{\perp}_{L}}\bigg(\tau;\binom{\alpha_{M_{\mathbb{R}}^{\perp}}}{\beta_{M_{\mathbb{R}}^{\perp}}};u^{\perp},p_{u^{\perp}}\bigg)\] for every $\tau\in\mathcal{H}$, where $\alpha_{M_{\mathbb{R}}}$, $\alpha_{M_{\mathbb{R}}^{\perp}}$, $\beta_{M_{\mathbb{R}}}$, and $\beta_{M_{\mathbb{R}}^{\perp}}$ are the corresponding projections. \label{dirsum}
\end{lem}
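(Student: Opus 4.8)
The plan is to decompose every factor in the series \eqref{Thetagen} defining $\Theta_{L}$ along the orthogonal splitting $L_{\mathbb{R}}=M_{\mathbb{R}}\oplus M_{\mathbb{R}}^{\perp}$, and then to recognize the resulting double sum as the tensor product of the two theta series on the right-hand side. The only step that requires genuine input beyond bookkeeping is the behaviour of the operator $e^{-\Delta_{v}/8\pi y}$ with respect to the factorization $p_{v}=p_{u}p_{u^{\perp}}$.

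First I would record the consequences of the hypothesis $L=M\oplus M^{\perp}_{L}$. Since this is an orthogonal direct sum of even lattices, one has $L^{*}=M^{*}\oplus(M^{\perp}_{L})^{*}$, so every $\lambda\in L^{*}$ is uniquely $\mu+\nu$ with $\mu=\lambda_{M_{\mathbb{R}}}\in M^{*}$ and $\nu=\lambda_{M_{\mathbb{R}}^{\perp}}\in(M^{\perp}_{L})^{*}$, and summation over $L^{*}$ becomes summation over $M^{*}\times(M^{\perp}_{L})^{*}$; dually $D_{L}=D_{M}\oplus D_{M^{\perp}_{L}}$, and under the canonical isomorphism $\mathbb{C}[D_{L}]\cong\mathbb{C}[D_{M}]\otimes\mathbb{C}[D_{M^{\perp}_{L}}]$ the vector $\mathfrak{e}_{\lambda+L}$ corresponds to $\mathfrak{e}_{\mu+M}\otimes\mathfrak{e}_{\nu+M^{\perp}_{L}}$. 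I would also fix the splittings $\alpha=\alpha_{M_{\mathbb{R}}}+\alpha_{M_{\mathbb{R}}^{\perp}}$ and $\beta=\beta_{M_{\mathbb{R}}}+\beta_{M_{\mathbb{R}}^{\perp}}$. From $v=u\oplus u^{\perp}$ one gets $v_{\pm}=u_{\pm}\oplus u^{\perp}_{\pm}$ orthogonally with $u_{\pm}\subseteq M_{\mathbb{R}}$ and $u^{\perp}_{\pm}\subseteq M_{\mathbb{R}}^{\perp}$, so for every $\xi\in L_{\mathbb{R}}$ we have $\xi_{v_{\pm}}=(\xi_{M_{\mathbb{R}}})_{u_{\pm}}+(\xi_{M_{\mathbb{R}}^{\perp}})_{u^{\perp}_{\pm}}$, hence $\xi_{v_{\pm}}^{2}=(\xi_{M_{\mathbb{R}}})_{u_{\pm}}^{2}+(\xi_{M_{\mathbb{R}}^{\perp}})_{u^{\perp}_{\pm}}^{2}$; applying this with $\xi=\lambda+\beta$ separates the two quadratic exponents, and the orthogonality $M_{\mathbb{R}}\perp M_{\mathbb{R}}^{\perp}$ separates the linear term as $(\lambda+\tfrac{\beta}{2},\alpha)=(\mu+\tfrac{\beta_{M_{\mathbb{R}}}}{2},\alpha_{M_{\mathbb{R}}})+(\nu+\tfrac{\beta_{M_{\mathbb{R}}^{\perp}}}{2},\alpha_{M_{\mathbb{R}}^{\perp}})$. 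Finally the normalizing power factors as $y^{\frac{b_{-}}{2}+m_{-}}=y^{\frac{c_{-}}{2}+n_{-}}\cdot y^{\frac{b_{-}-c_{-}}{2}+m_{-}-n_{-}}$.

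The key point is the polynomial factor. I claim that under the hypothesis $v=u\oplus u^{\perp}$ the majorant Laplacian of $L_{\mathbb{R}}$ attached to $v$ decomposes as $\Delta_{v}=\Delta_{u}+\Delta_{u^{\perp}}$, where $\Delta_{u}$ and $\Delta_{u^{\perp}}$ are the majorant Laplacians of $M_{\mathbb{R}}$ and $M_{\mathbb{R}}^{\perp}$ attached to $u$ and $u^{\perp}$, viewed as operators on functions on $L_{\mathbb{R}}$ acting through the $M_{\mathbb{R}}$- and $M_{\mathbb{R}}^{\perp}$-variables respectively. Indeed, because $v_{+}$ and $v_{-}$ both split orthogonally and each summand retains its sign, the majorant quadratic form attached to $v$ is the orthogonal direct sum of the majorant forms attached to $u$ and to $u^{\perp}$; so in coordinates adapted to $u_{+},u_{-},u^{\perp}_{+},u^{\perp}_{-}$ it is block diagonal and $\Delta_{v}$ is the corresponding sum of second-order operators. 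Since $\Delta_{u}$ and $\Delta_{u^{\perp}}$ commute and act on disjoint variables, while $p_{u}$ involves only the $M_{\mathbb{R}}$-variables and $p_{u^{\perp}}$ only the $M_{\mathbb{R}}^{\perp}$-variables, expanding $(\Delta_{u}+\Delta_{u^{\perp}})^{j}(p_{u}p_{u^{\perp}})$ by the binomial theorem and using $\Delta_{u^{\perp}}p_{u}=0=\Delta_{u}p_{u^{\perp}}$ yields, for every scalar $c$ (here $c=-\tfrac{1}{8\pi y}$),
\[ e^{c\Delta_{v}}(p_{u}p_{u^{\perp}})=\big(e^{c\Delta_{u}}p_{u}\big)\cdot\big(e^{c\Delta_{u^{\perp}}}p_{u^{\perp}}\big), \]
which, evaluated at $\xi=\lambda+\beta$, means the first factor is evaluated at $\mu+\beta_{M_{\mathbb{R}}}$ and the second at $\nu+\beta_{M_{\mathbb{R}}^{\perp}}$.

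Putting these decompositions into \eqref{Thetagen}, the summand indexed by $\lambda=\mu+\nu$ becomes the tensor product of a factor depending only on $\mu$ --- which, since $M$ has signature $(c_{+},c_{-})$ and $p_{u}$ has degree $(n_{+},n_{-})$ with respect to $u$, is exactly the $\mu$-summand of $\Theta_{M}\big(\tau;\binom{\alpha_{M_{\mathbb{R}}}}{\beta_{M_{\mathbb{R}}}};u,p_{u}\big)$ --- times a factor depending only on $\nu$, the $\nu$-summand of $\Theta_{M^{\perp}_{L}}\big(\tau;\binom{\alpha_{M_{\mathbb{R}}^{\perp}}}{\beta_{M_{\mathbb{R}}^{\perp}}};u^{\perp},p_{u^{\perp}}\big)$. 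Because the Gaussian factors $e^{-\pi y\,\xi_{v_{+}}^{2}}$ and $e^{\pi y\,\xi_{v_{-}}^{2}}$ dominate the polynomial growth, the double series converges absolutely, so by Fubini it equals the product of the two single series, i.e.\ the asserted tensor product. I expect the additivity $\Delta_{v}=\Delta_{u}+\Delta_{u^{\perp}}$ to be the only real obstacle: it genuinely uses $v=u\oplus u^{\perp}$, since otherwise the majorant couples the two blocks and $e^{c\Delta_{v}}$ fails to respect the factorization $p_{v}=p_{u}p_{u^{\perp}}$. I would also flag the underlying convention that $p_{u}$ and $p_{u^{\perp}}$ are polynomials on $M_{\mathbb{R}}$ and $M_{\mathbb{R}}^{\perp}$ pulled back to $L_{\mathbb{R}}$ along the projections, so that their product and the evaluations above match the inputs required by $\Theta_{M}$ and $\Theta_{M^{\perp}_{L}}$.
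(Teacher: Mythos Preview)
Your proposal is correct and follows essentially the same approach as the paper's proof: split $L^{*}$, $D_{L}$, the projections onto $v_{\pm}$, the quadratic and linear exponents, the Laplacian $\Delta_{v}=\Delta_{u}+\Delta_{u^{\perp}}$, and hence the operator $e^{-\Delta_{v}/8\pi y}$ acting on $p_{v}=p_{u}p_{u^{\perp}}$, so that each summand factors and the theta series becomes the tensor product. Your write-up is simply more explicit than the paper's in justifying the Laplacian decomposition, the exponential factorization, and absolute convergence, but there is no substantive difference in strategy.
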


\begin{proof}
When $L=M \oplus M^{\perp}_{L}$ we have $L^{*}=M^{*}\oplus(M^{\perp}_{L})^{*}$ and $D_{L}=D_{M} \oplus D_{M^{\perp}_{L}}$, and any element $\lambda \in L+\gamma \subseteq L^{*}$ is the sum of some $\mu \in M^{*}$ and $\nu \in (M^{\perp}_{L})^{*}$ (which are in the respective cosets in $D_{M}$ and $D_{M^{\perp}_{L}}$ that correspond to $\gamma$). Then our assumption on $v$ implies that \[(\lambda+\beta)_{v_{\pm}}=(\mu+\beta_{M_{\mathbb{R}}})_{u_{\pm}}+(\nu+\beta_{M_{\mathbb{R}}^{\perp}})_{u^{\perp}_{\pm}},\] and since the two terms are orthogonal, by taking norms we get \[(\lambda+\beta)_{v_{\pm}}^{2}=(\mu+\beta_{M_{\mathbb{R}}})_{u_{\pm}}^{2}+(\nu+\beta_{M_{\mathbb{R}}^{\perp}})_{u^{\perp}_{\pm}}^{2}.\] it is also clear that \[\big(\lambda+\tfrac{\beta}{2},\alpha\big)=\bigg(\mu+\frac{\beta_{M_{\mathbb{R}}}}{2},\alpha_{M_{\mathbb{R}}}\bigg)+\bigg(\nu+\frac{\beta_{M_{\mathbb{R}}^{\perp}}}{2},\alpha_{M_{\mathbb{R}}^{\perp}}\bigg).\] From $v=u \oplus u^{\perp}$ we deduce that the operator $\Delta_{v}$ is the sum of $\Delta_{u}$ acting on the variables from $M_{\mathbb{R}}$ and $\Delta_{u^{\perp}}$ acting on the variables from $M_{\mathbb{R}}^{\perp}$ (which commute), and thus our assumption on $p_{v}$ implies that \[e^{-\frac{\Delta_{v}}{8\pi y}}(p_{v})(\lambda+\beta)=e^{-\frac{\Delta_{u}}{8\pi y}}(p_{u})\big(\mu+\beta_{M_{\mathbb{R}}}\big) \cdot e^{-\frac{\Delta_{u}}{8\pi y}}(p_{u^{\perp}})\big(\nu+\beta_{M_{\mathbb{R}}^{\perp}}\big).\] In total, the summand from Equation \eqref{Thetagen} that corresponds to $\lambda$ is the product of those associated with $\mu$ and $\nu$. As the external power of $y$ is the correct one, the separation into the cosets in $D_{L}=D_{M} \oplus D_{M^{\perp}_{L}}$ yields the desired equality. This proves the lemma.
\end{proof}

\smallskip

Let $\Lambda$ now be an even lattice with discriminant form $D_{\Lambda}$. It is well-known, and very easy to check, that subgroups $H$ of $D_{\Lambda}$ that are \emph{isotropic} (i.e., on which the $\mathbb{Q}/\mathbb{Z}$ quadratic form, and thus also the bilinear form, vanishes), are in one-to-one correspondence with \emph{over-lattices} of $\Lambda$, namely subgroups $L\subseteq\Lambda_{\mathbb{R}}$ that contain $\Lambda$ and are still even lattices. When the over-lattice $L$ is associated with the subgroup $H$, the discriminant form $D_{L}$ is canonically isomorphic to $H^{\perp}/H$. This is so, because the dual lattice $L^{*}$ is the inverse image of $H^{\perp}$ in $\Lambda^{*}$. With this information, there are two operations between the spaces $\mathbb{C}[D_{\Lambda}]$ and $\mathbb{C}[D_{L}]$, defined by
\begin{equation}
\uparrow^{L}_{\Lambda}:\mathbb{C}[D_{L}]\to\mathbb{C}[D_{\Lambda}],\qquad\uparrow^{L}_{\Lambda}\mathfrak{e}_{\gamma}:=\sum_{\delta \in H^{\perp},\ \delta+H=\gamma}\mathfrak{e}_{\delta} \label{uparrowdef}
\end{equation}
in one direction, and in the other one we have
\begin{equation}
\downarrow^{L}_{\Lambda}:\mathbb{C}[D_{\Lambda}]\to\mathbb{C}[D_{L}],\qquad\downarrow^{L}_{\Lambda}\mathfrak{e}_{\delta}:=\begin{cases} \mathfrak{e}_{\delta+H} & \mathrm{in\ case\ }\delta \in H^{\perp} \\  0 & \mathrm{when\ }\delta \not\in H^{\perp}. \end{cases} \label{downarrowdef}
\end{equation}
The most important properties of these operators are the following ones.
\begin{lem}
The maps $\uparrow^{L}_{\Lambda}$ and $\downarrow^{L}_{\Lambda}$ from Equations \eqref{uparrowdef} and \eqref{downarrowdef} are maps of $\operatorname{Mp}_{2}(\mathbb{Z})$-representations, when the latter group acts on $\mathbb{C}[D_{L}]$ and $\mathbb{C}[D_{\Lambda}]$ via the respective Weil representations. The combination $\downarrow^{L}_{\Lambda}\circ\uparrow^{L}_{\Lambda}$ is $|H|$ times the identity map on $\mathbb{C}[D_{L}]$. \label{arrowprop}
\end{lem}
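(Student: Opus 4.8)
The plan is to verify each assertion directly from the explicit formulas in Equations \eqref{uparrowdef}, \eqref{downarrowdef}, and the description of the Weil representation in Equation \eqref{Weildef}. For the $\operatorname{Mp}_{2}(\mathbb{Z})$-equivariance, since $\operatorname{Mp}_{2}(\mathbb{Z})$ is generated by $T$ and $S$, it suffices to check that $\uparrow^{L}_{\Lambda}$ and $\downarrow^{L}_{\Lambda}$ intertwine the actions of $\rho_{\Lambda}$ and $\rho_{L}$ on these two generators. For $T$ the computation is immediate: if $\delta\in H^{\perp}\subseteq\Lambda^{*}$ projects to $\gamma=\delta+H\in D_{L}$, then because $H$ is isotropic and the quadratic form on $D_{L}\cong H^{\perp}/H$ is the one induced from $D_{\Lambda}$, we have $\tfrac{\delta^{2}}{2}\equiv\tfrac{\gamma^{2}}{2}\pmod{\mathbb{Z}}$, so the phase $\mathbf{e}(\tfrac{\gamma^{2}}{2})$ matches the phase $\mathbf{e}(\tfrac{\delta^{2}}{2})$ attached to every $\delta$ in the fiber; commuting $\rho(T)$ past $\uparrow^{L}_{\Lambda}$ or $\downarrow^{L}_{\Lambda}$ is then trivial (one must recall that $b_{+}-b_{-}$, which enters $\rho_{L}(S)$, is the same for $L$ and $\Lambda$ since $L_{\mathbb{R}}=\Lambda_{\mathbb{R}}$).

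The case of $S$ is the only place requiring a genuine calculation. For $\uparrow^{L}_{\Lambda}$ one applies $\rho_{\Lambda}(S)$ to $\uparrow^{L}_{\Lambda}\mathfrak{e}_{\gamma}=\sum_{\delta\in H^{\perp},\,\delta+H=\gamma}\mathfrak{e}_{\delta}$ and must reorganize the resulting double sum $\sum_{\delta}\sum_{\varepsilon\in D_{\Lambda}}\mathbf{e}(-(\delta,\varepsilon))\mathfrak{e}_{\varepsilon}$. The key arithmetic input is that summing $\mathbf{e}(-(\delta,\varepsilon))$ over a full coset $\delta\in\gamma_{0}+H$ (for any fixed lift $\gamma_{0}$) gives $|H|\cdot\mathbf{e}(-(\gamma_{0},\varepsilon))$ if $\varepsilon\in H^{\perp}$, i.e.\ if $(H,\varepsilon)=0$, and $0$ otherwise — this is the standard orthogonality of characters on the finite group $H$, using that the bilinear form on $D_{\Lambda}$ is non-degenerate. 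Thus only $\varepsilon\in H^{\perp}$ survive, the pairing $(\gamma_{0},\varepsilon)$ depends only on the images $\gamma,\delta'\in D_{L}\cong H^{\perp}/H$ (again by isotropy of $H$), and after grouping the $\varepsilon$'s into $H$-cosets one recognizes $\uparrow^{L}_{\Lambda}$ applied to $\rho_{L}(S)\mathfrak{e}_{\gamma}$, with the normalizing constants matching because $|D_{\Lambda}|=|H|^{2}|D_{L}|$ (so $|H|/\sqrt{|D_{\Lambda}|}=1/\sqrt{|D_{L}|}$) and the Gauss-sum prefactor $\mathbf{e}(\tfrac{b_{-}-b_{+}}{8})$ is unchanged. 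The equivariance of $\downarrow^{L}_{\Lambda}$ then follows either by an entirely parallel (slightly easier) computation, or formally: $\downarrow^{L}_{\Lambda}$ is, up to the scalar $|H|$, the adjoint of $\uparrow^{L}_{\Lambda}$ with respect to the natural Hermitian pairings on $\mathbb{C}[D_{\Lambda}]$ and $\mathbb{C}[D_{L}]$ under which the Weil representations are unitary, so equivariance of one gives equivariance of the other.

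Finally, the identity $\downarrow^{L}_{\Lambda}\circ\uparrow^{L}_{\Lambda}=|H|\cdot\operatorname{id}$ is a direct unwinding of the definitions with no representation theory: applying $\uparrow^{L}_{\Lambda}$ to $\mathfrak{e}_{\gamma}$ produces $\sum_{\delta+H=\gamma}\mathfrak{e}_{\delta}$, every term of which lies in $H^{\perp}$ and is sent by $\downarrow^{L}_{\Lambda}$ to $\mathfrak{e}_{\gamma}$, and there are exactly $|H|$ such terms. The main (indeed only) obstacle is bookkeeping in the $S$-computation: keeping straight the three finite groups $H$, $H^{\perp}$, $D_{\Lambda}$ and their quotients, correctly invoking non-degeneracy of the discriminant bilinear form to get the character-orthogonality vanishing, and checking that all the explicit constants ($|H|$, $\sqrt{|D_{\Lambda}|}$, $\sqrt{|D_{L}|}$, the eighth root of unity) cancel as claimed. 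None of this is deep, but it must be done carefully.
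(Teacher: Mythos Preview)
Your proposal is correct and carries out in full the standard direct verification on the generators $T$ and $S$, which is exactly the argument the paper defers to by citing Lemma~2.1 of \cite{[M]} for the first assertion and calling the second a ``simple computation.'' One small slip: $\downarrow^{L}_{\Lambda}$ is \emph{exactly} the adjoint of $\uparrow^{L}_{\Lambda}$ with respect to the orthonormal Hermitian structures (no scalar $|H|$ is needed), as one checks directly from $\langle\uparrow^{L}_{\Lambda}\mathfrak{e}_{\gamma},\mathfrak{e}_{\delta}\rangle=\langle\mathfrak{e}_{\gamma},\downarrow^{L}_{\Lambda}\mathfrak{e}_{\delta}\rangle$ on basis vectors; but this does not affect your argument, since adjoints of intertwiners between unitary representations are intertwiners regardless of scaling.
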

The first assertion of Lemma \ref{arrowprop} is proved in detail in Lemma 2.1 of \cite{[M]}, though it was known much earlier. The second assertion is also well-known, and follows from a simple computation. For the other composition, see Remark \ref{innprod} below.

A simple consequence, that generalizes Lemma 2.2 of \cite{[M]} (which was also well-known before) is as follows.
\begin{cor}
If $F:\mathcal{H} \times L_{\mathbb{R}}^{2}\to\mathbb{C}[D_{L}]$ and $G:\mathcal{H}\times\Lambda_{\mathbb{R}}^{2}\to\mathbb{C}[D_{\Lambda}]$ satisfy the functional equation from Theorem \ref{modTheta} with some weight $k$ and the representations $\rho_{L}$ and $\rho_{\Lambda}$ respectively, then the functions $\uparrow^{L}_{\Lambda}F$ and $\downarrow^{L}_{\Lambda}G$ also satisfy this equation, with the same weight $k$ and the respective representations $\rho_{\Lambda}$ and $\rho_{L}$. \label{arrowsmod}
\end{cor}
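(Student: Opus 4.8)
The plan is to note that this is a purely formal consequence of Lemma~\ref{arrowprop}, together with the observation that $L$ and $\Lambda$ determine the same real quadratic space. Since $\Lambda\subseteq L\subseteq\Lambda_{\mathbb{R}}$ we have $L_{\mathbb{R}}=\Lambda_{\mathbb{R}}$, so the domains $\mathcal{H}\times L_{\mathbb{R}}^{2}$ and $\mathcal{H}\times\Lambda_{\mathbb{R}}^{2}$ coincide, the action of $\operatorname{Mp}_{2}(\mathbb{Z})$ on the column vectors $\binom{\alpha}{\beta}$ is literally the same on both sides, and the weight $k$ (which, in the situation of Theorem~\ref{modTheta}, depends only on the common signature and on the common degree of the polynomial) is also the same. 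Moreover $\uparrow^{L}_{\Lambda}$ and $\downarrow^{L}_{\Lambda}$ are linear maps between $\mathbb{C}[D_{L}]$ and $\mathbb{C}[D_{\Lambda}]$ that do not involve $\tau$ or $\binom{\alpha}{\beta}$.

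First I would spell out the hypothesis on $F$: for every $(A,\phi)\in\operatorname{Mp}_{2}(\mathbb{Z})$, $\tau\in\mathcal{H}$, and $\binom{\alpha}{\beta}\in L_{\mathbb{R}}^{2}$, \[F\big(A\tau;\textstyle{A\binom{\alpha}{\beta}}\big)=\phi(\tau)^{2k}\rho_{L}\big((A,\phi)\big)F\big(\tau;\textstyle{\binom{\alpha}{\beta}}\big).\] Applying the fixed linear map $\uparrow^{L}_{\Lambda}$ to both sides and pulling the scalar $\phi(\tau)^{2k}$ out of it, the right-hand side becomes $\phi(\tau)^{2k}\big(\uparrow^{L}_{\Lambda}\circ\rho_{L}((A,\phi))\big)F(\tau;\binom{\alpha}{\beta})$. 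By the first assertion of Lemma~\ref{arrowprop}, $\uparrow^{L}_{\Lambda}$ intertwines $\rho_{L}$ with $\rho_{\Lambda}$, i.e.\ $\uparrow^{L}_{\Lambda}\circ\rho_{L}((A,\phi))=\rho_{\Lambda}((A,\phi))\circ\uparrow^{L}_{\Lambda}$; since the left-hand side of the displayed equation equals $\big(\uparrow^{L}_{\Lambda}F\big)\big(A\tau;A\binom{\alpha}{\beta}\big)$, we obtain \[\big(\uparrow^{L}_{\Lambda}F\big)\big(A\tau;\textstyle{A\binom{\alpha}{\beta}}\big)=\phi(\tau)^{2k}\rho_{\Lambda}\big((A,\phi)\big)\big(\uparrow^{L}_{\Lambda}F\big)\big(\tau;\textstyle{\binom{\alpha}{\beta}}\big),\] which is exactly the functional equation of Theorem~\ref{modTheta} for $\uparrow^{L}_{\Lambda}F$ with weight $k$ and representation $\rho_{\Lambda}$. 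Running the identical argument with $\downarrow^{L}_{\Lambda}$ and $G$ in place of $\uparrow^{L}_{\Lambda}$ and $F$, and again invoking Lemma~\ref{arrowprop}, gives the corresponding statement for $\downarrow^{L}_{\Lambda}G$ with representation $\rho_{L}$.

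I do not expect a genuine obstacle here: all of the substance is contained in Lemma~\ref{arrowprop}, which has already been granted. The only point deserving a word of care is the bookkeeping away from the lattice-valued factor, namely that $A\binom{\alpha}{\beta}$ and the automorphy factor $\phi(\tau)^{2k}$ really are the same whether one reads the functional equation over $L$ or over $\Lambda$; this is immediate from $L_{\mathbb{R}}=\Lambda_{\mathbb{R}}$. If one prefers, it also suffices to check the intertwining relation only on the generators $T$ and $S$ of $\operatorname{Mp}_{2}(\mathbb{Z})$, since both sides of that relation are group homomorphisms in $(A,\phi)$.
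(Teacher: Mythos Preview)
Your proof is correct and follows exactly the same route as the paper, which simply states that the corollary ``follows directly from Lemma~\ref{arrowprop} and the equality $L_{\mathbb{R}}=\Lambda_{\mathbb{R}}$.'' You have spelled out this direct argument in full detail, including the intertwining relation and the observation that the automorphy data are unchanged under passing between $L$ and $\Lambda$.
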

Corollary \ref{arrowsmod} follows directly from Lemma \ref{arrowprop} and the equality $L_{\mathbb{R}}=\Lambda_{\mathbb{R}}$, and its restriction to the zero vector of the square of this space is just the statement that $\uparrow^{L}_{\Lambda}$ and $\downarrow^{L}_{\Lambda}$ take modular forms to modular forms.

We can now express the theta function of an over-lattice $L$ of the lattice $\Lambda$ using that of $\Lambda$. Recalling that the Grassmannian from Equation \eqref{Grassdef} also depends only on the associated real vector space, all the parameters from Equation \eqref{Thetagen} can be taken to be the same for $L$ and $\Lambda$
\begin{lem}
If $L$ is an over-lattice of $\Lambda$ then for every element $v$ in the common Grassmannian, polynomial $p_{v}$, and two vectors $\alpha$ and $\beta$ in the common real vector space we have the equality $\Theta_{L}\big(\tau;\binom{\alpha}{\beta};v,p_{v}\big)=\downarrow^{L}_{\Lambda}\Theta_{\Lambda}\big(\tau;\binom{\alpha}{\beta};v,p_{v}\big)$. \label{downTheta}
\end{lem}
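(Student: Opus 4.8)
The plan is to apply the linear operator $\downarrow^{L}_{\Lambda}$ directly to the defining series \eqref{Thetagen} of $\Theta_{\Lambda}\big(\tau;\binom{\alpha}{\beta};v,p_{v}\big)$ and to recognise the outcome as the series for $\Theta_{L}$. First I would recall the dictionary set up before Lemma \ref{arrowprop}: if $H:=L/\Lambda\subseteq D_{\Lambda}$ is the isotropic subgroup corresponding to the over-lattice $L$, then $L$ is the preimage of $H$ under the projection $\Lambda^{*}\to D_{\Lambda}$, the dual $L^{*}$ is the preimage of $H^{\perp}$, and the canonical isomorphism $D_{L}\cong H^{\perp}/H$ carries the class $\lambda+L$ of an element $\lambda\in L^{*}$ to $(\lambda+\Lambda)+H$. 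The second, and essentially only, observation needed is that because $\Lambda_{\mathbb{R}}=L_{\mathbb{R}}$, the data $v$, $p_{v}$, $\Delta_{v}$, $\alpha$, $\beta$ (and the exponents $b_{-}$, $m_{-}$) are literally the same for both lattices, so the scalar coefficient that Equation \eqref{Thetagen} attaches to an index $\lambda$ depends only on $\lambda$ as an element of the common real space $L_{\mathbb{R}}$; it is therefore unchanged whether $\lambda$ is regarded as an element of $\Lambda^{*}$ contributing to $\Theta_{\Lambda}$ or of $L^{*}$ contributing to $\Theta_{L}$.

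Next I would group the series for $\Theta_{\Lambda}$ by cosets, writing it as the finite sum $\sum_{\gamma\in D_{\Lambda}}f_{\gamma}(\tau)\,\mathfrak{e}_{\gamma}$ with $f_{\gamma}$ the $\mathfrak{e}_{\gamma}$-component, i.e.\ the convergent sum over $\lambda\in\Lambda+\gamma$ of the scalar coefficients appearing in \eqref{Thetagen}. By $\mathbb{C}$-linearity and Equation \eqref{downarrowdef} we get $\downarrow^{L}_{\Lambda}\Theta_{\Lambda}=\sum_{\gamma\in H^{\perp}}f_{\gamma}\mathfrak{e}_{\gamma+H}$. Now as $\gamma$ ranges over the elements of $H^{\perp}$ lying above a fixed class $\bar\gamma\in H^{\perp}/H\cong D_{L}$, the cosets $\Lambda+\gamma$ partition the single $L$-coset inside $L^{*}$ that represents $\bar\gamma$; summing the corresponding $f_{\gamma}$ therefore amounts to summing the coefficient of \eqref{Thetagen} over that entire $L$-coset, which by the observation above is exactly the $\mathfrak{e}_{\bar\gamma}$-component of $\Theta_{L}$. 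Collecting over all $\bar\gamma\in D_{L}$ gives $\downarrow^{L}_{\Lambda}\Theta_{\Lambda}=\Theta_{L}$, as claimed.

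I do not anticipate any genuine obstacle here: the content is entirely bookkeeping around the identification $D_{L}\cong H^{\perp}/H$ together with the fact that restricting the index set from $\Lambda^{*}$ to $L^{*}$ is precisely the effect of $\downarrow^{L}_{\Lambda}$ on the coset labels, while every analytic ingredient of \eqref{Thetagen} lives on the common real space and does not see which lattice one started from. It is worth noting, to explain why the lemma is stated in only one direction, that $\uparrow^{L}_{\Lambda}\Theta_{L}$ is in general \emph{not} $\Theta_{\Lambda}$: applying $\uparrow^{L}_{\Lambda}$ would attach to a coset $\mu+\Lambda\subseteq H^{\perp}$ the coefficient indexed by a representative of $(\mu+\Lambda)+H$ in $L^{*}$ rather than the coefficient indexed by $\mu$ itself, and would attach $0$ to the cosets outside $H^{\perp}$, so it cannot agree with $\Theta_{\Lambda}$.
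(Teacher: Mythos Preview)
Your proof is correct and follows essentially the same approach as the paper's own proof: both observe that the scalar summand attached to $\lambda$ depends only on the common real data, and that the restriction of the index set from $\Lambda^{*}$ to $L^{*}$ together with the gathering of $H$-cosets is precisely the effect of $\downarrow^{L}_{\Lambda}$ from Equation~\eqref{downarrowdef}. Your version is simply more explicit in writing out the coset bookkeeping, and the closing remark about $\uparrow^{L}_{\Lambda}\Theta_{L}$ is a correct (if optional) addendum.
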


\begin{proof}
Since the parameters $v$, $p_{v}$, $\alpha$, and $\beta$ are the same, the scalar summand associated with $\lambda \in L^{*}\subseteq\Lambda^{*}$ is the same in $\Theta_{L}\big(\tau;\binom{\alpha}{\beta};v,p_{v}\big)$ and in $\downarrow^{L}_{\Lambda}\Theta_{\Lambda}\big(\tau;\binom{\alpha}{\beta};v,p_{v}\big)$. The restriction to elements of $L^{*}$ in the former theta function, and the gathering of the coefficients associated with different elements of the same $H$-coset together, correspond precisely to the definition in Equation \eqref{downarrowdef}. This proves the lemma.
\end{proof}
We can now obtain the formula relating the theta functions of $L$, $M$, and $M^{\perp}_{L}$ without the direct sum assumption on the lattices.
\begin{prop}
For $\tau$, $v$, $p_{v}$, $u$, $p_{u}$, $u^{\perp}$, $p_{u^{\perp}}$, $\alpha$, $\beta$, and their projections as in Lemma \ref{dirsum}, the theta function $\Theta_{L}\big(\tau;\binom{\alpha}{\beta};v,p_{v}\big)$ equals \[\Bigg\downarrow^{L}_{M \oplus M^{\perp}_{L}}\Bigg[\Theta_{M}\bigg(\tau;\binom{\alpha_{M_{\mathbb{R}}}}{\beta_{M_{\mathbb{R}}}};u,p_{u}\bigg)\otimes \Theta_{M^{\perp}_{L}}\bigg(\tau;\binom{\alpha_{M_{\mathbb{R}}^{\perp}}}{\beta_{M_{\mathbb{R}}^{\perp}}};u^{\perp},p_{u^{\perp}}\bigg)\Bigg].\] \label{sepTheta}
\end{prop}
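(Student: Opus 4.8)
The plan is to realize $L$ as an over-lattice of the orthogonal direct sum $\Lambda := M \oplus M^{\perp}_{L}$ and then chain together Lemmas \ref{dirsum} and \ref{downTheta}. First I would verify that $\Lambda$ is a sublattice of $L$ on which the formalism of Equations \eqref{uparrowdef} and \eqref{downarrowdef} applies. Since $M$ is non-degenerate we have $M \cap M^{\perp}_{L}=\{0\}$, so the sum $M+M^{\perp}_{L}$ inside $L$ is direct and equals $\Lambda$; moreover non-degeneracy gives $L_{\mathbb{R}}=M_{\mathbb{R}} \oplus M_{\mathbb{R}}^{\perp}=\Lambda_{\mathbb{R}}$, whence $\Lambda$ has finite index in $L$, and $L$ is even by hypothesis. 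Therefore $L$ is an even over-lattice of $\Lambda$, corresponding to an isotropic subgroup $H\subseteq D_{\Lambda}=D_{M} \oplus D_{M^{\perp}_{L}}$, and the operator $\downarrow^{L}_{\Lambda}=\downarrow^{L}_{M \oplus M^{\perp}_{L}}$ is defined.

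Next, because the Grassmannian of Equation \eqref{Grassdef} and the space of column vectors depend only on the common real vector space $L_{\mathbb{R}}=\Lambda_{\mathbb{R}}$, the data $v$, $p_{v}$, $\alpha$, $\beta$ also serve as parameters for $\Theta_{\Lambda}$. Applying Lemma \ref{downTheta} with this over-lattice $\Lambda$ yields
\[\Theta_{L}\big(\tau;\textstyle{\binom{\alpha}{\beta}};v,p_{v}\big)=\downarrow^{L}_{M \oplus M^{\perp}_{L}}\Theta_{M \oplus M^{\perp}_{L}}\big(\tau;\textstyle{\binom{\alpha}{\beta}};v,p_{v}\big).\]
Now $M \oplus M^{\perp}_{L}$ is a genuine orthogonal direct sum, with the orthogonal complement of $M$ inside it being precisely $M^{\perp}_{L}$, and by the hypotheses inherited from Lemma \ref{dirsum} we have $v=u \oplus u^{\perp}$ and $p_{v}=p_{u}p_{u^{\perp}}$. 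Hence Lemma \ref{dirsum}, applied to the pair $M\subseteq M \oplus M^{\perp}_{L}$, identifies $\Theta_{M \oplus M^{\perp}_{L}}\big(\tau;\binom{\alpha}{\beta};v,p_{v}\big)$ with the tensor product $\Theta_{M}\big(\tau;\binom{\alpha_{M_{\mathbb{R}}}}{\beta_{M_{\mathbb{R}}}};u,p_{u}\big)\otimes\Theta_{M^{\perp}_{L}}\big(\tau;\binom{\alpha_{M_{\mathbb{R}}^{\perp}}}{\beta_{M_{\mathbb{R}}^{\perp}}};u^{\perp},p_{u^{\perp}}\big)$. Substituting this into the displayed equality gives the proposition.

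The only point requiring any care is the bookkeeping in the first step: one must check that the orthogonal complement of $M$ taken inside $M \oplus M^{\perp}_{L}$ coincides with $M^{\perp}_{L}$ (it does, as $M^{\perp}_{L}\perp M$ and the two span $\Lambda_{\mathbb{R}}$), that the projections $\alpha_{M_{\mathbb{R}}},\alpha_{M_{\mathbb{R}}^{\perp}},\beta_{M_{\mathbb{R}}},\beta_{M_{\mathbb{R}}^{\perp}}$ computed with respect to the decomposition $\Lambda_{\mathbb{R}}=M_{\mathbb{R}} \oplus M_{\mathbb{R}}^{\perp}$ are the same ones appearing in the statement, and that the conditions $v=u \oplus u^{\perp}$ and $p_{v}=p_{u}p_{u^{\perp}}$ are exactly what Lemma \ref{dirsum} demands. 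Everything else is a formal composition of two already-established identities, so I do not anticipate a genuine obstacle here.
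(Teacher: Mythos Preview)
Your proof is correct and follows exactly the same approach as the paper: apply Lemma \ref{downTheta} with $\Lambda=M\oplus M^{\perp}_{L}$ to pass from $\Theta_{L}$ to $\Theta_{\Lambda}$, then invoke Lemma \ref{dirsum} for the direct sum $\Lambda$ to split $\Theta_{\Lambda}$ as a tensor product. The paper's proof is a one-line reference to these two lemmas, whereas you have helpfully spelled out the verification that $L$ is indeed an over-lattice of $M\oplus M^{\perp}_{L}$ and that the remaining hypotheses match.
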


\begin{proof}
The result follows from Lemmas \ref{dirsum} and \ref{downTheta}, when in the latter one we take $\Lambda=M \oplus M^{\perp}_{L}$. This proves the proposition.
\end{proof}

\medskip

Another formula, which turns out more useful, expresses the theta function $\Theta_{L}$ as the natural pairing of $\Theta_{M}$ with a more complicated theta function. Some incarnations of this function were known before: In the case where $b_{-}=1$, the sub-lattice $M$ is negative definite of rank 1, $v\in\operatorname{Gr}(L_{\mathbb{R}})$ is the element corresponding to the choice of $M$, $p=1$, and $\alpha=\beta=0$, this is the function denoted by $\Theta_{M,\lambda}$ in Theorem 10.6 of \cite{[B]} (in the opposite signature) or by $\Theta_{K,\omega}$ in Equation (4.27) of \cite{[BZ]}. This function is stated to be modular in these references (and others), a fact that is also used there. Related functions appear in some other signatures in, e.g., \cite{[V2]}, \cite{[M]}, and \cite{[SW]}, using representations related to the ones from \cite{[W1]}.

For introducing the pairing, we recall that given a lattice $L$, the lattice $L(-1)$ in which all the norms and pairings are inverted (hence also the signature) yields the discriminant form $D_{L}(-1)$ (with the same operation). We write the basis for $\mathbb{C}[D_{L(-1)}]$ as $\{\mathfrak{e}_{\gamma}^{*}\}_{\gamma \in D_{L}=D_{L(-1)}}$, meaning that we identify $\mathbb{C}[D_{L(-1)}]$ as the space dual to $\mathbb{C}[D_{L}]$ in such a way that the latter basis is the dual to the natural one for the latter space. This manifests itself in a pairing $\langle\cdot,\cdot\rangle_{L}$, with takes a vector $U\in\mathbb{C}[D_{L}]$ and $V\in\mathbb{C}[D_{L(-1)}]$ to the scalar $\langle U,V \rangle_{L}\in\mathbb{C}$. The Weil representation $\rho_{L(-1)}$ thus becomes identified with the representation $\rho_{L}^{*}$ dual to $\rho_{L}$. Moreover, the map $v_{+} \oplus v_{-} \mapsto v_{-} \oplus v_{+}$ is a canonical identification of $\operatorname{Gr}(L_{\mathbb{R}})$ with $\operatorname{Gr}\big(L_{\mathbb{R}}(-1)\big)$, which preserves the Laplacian operator $\Delta_{v}$ and inverts the order of the homogeneity degrees of polynomials. Since complex conjugation does not change these homogeneity degrees, it easily follows from the definition in Equation \eqref{Thetagen} that
\begin{equation}
\Theta_{L(-1)}\big(\tau;\textstyle{\binom{\alpha}{\beta}};v,p_{v}\big)=y^{\frac{b_{+}-b_{-}}{2}+m_{+}-m_{-}}\overline{\Theta_{L}\big(\tau;\textstyle{\binom{\alpha}{\beta}};v,\overline{p_{v}}\big)}. \label{negTheta}
\end{equation}

\begin{rmk}
It is fruitful, for many applications, to consider $\mathbb{C}[D_{L}]$ as an inner product space, by defining the natural basis to be orthonormal. Indeed, then $\rho_{L}$ becomes a unitary representation, and the operator $\uparrow^{L}_{\Lambda}$ from Equation \eqref{uparrowdef}, in case $L$ is an over-lattice of $\Lambda$, can be rescaled to an isometry. If $H=L/\Lambda \subseteq D_{\Lambda}$ is the corresponding isotropic subgroup, then the image of $\uparrow^{L}_{\Lambda}$ (or of this isometry) is the sub-representation consisting of the vectors that are supported on $H^{\perp}$ and in which the scalars are constant on cosets of $H$ in $H^{\perp}$. The composition $\downarrow^{L}_{\Lambda}\circ\uparrow^{L}_{\Lambda}$ is $|H|$ times the orthogonal projection onto this sub-representation of $\rho_{\Lambda}$. The pairing $\langle\cdot,\cdot\rangle_{L}$ is related to this inner product via the conjugate-linear map from $\mathbb{C}[D_{L}]$ to $\mathbb{C}[D_{L(-1)}]$ that takes $\mathfrak{e}_{\gamma}$ to $\mathfrak{e}_{\gamma}^{*}$ for every $\gamma \in D_{L}$. Many references define theta lifts using this inner product, but all the definitions are equivalent to the ones using our pairing via appropriate conventions and Equation \eqref{negTheta}. Since Proposition \ref{pairTheta} uses a pairing like $\langle\cdot,\cdot\rangle_{L}$, we choose to use this convention also for theta lifts. \label{innprod}
\end{rmk}

\smallskip

Let us now define the theta function in question. For a lattice $L$ and a primitive non-degenerate sub-lattice $M$, assume that $v=u \oplus u^{\perp}$ and $p_{v}=p_{u}p_{u^{\perp}}$ as in Lemma \ref{dirsum} and Proposition \ref{sepTheta}, and observe that the orthogonal projection from $L_{\mathbb{R}}$ onto $M_{\mathbb{R}}$ takes $L^{*}$ onto $M^{*}$ (this is the map dual to the injection of $M$ into $L$ by primitivity). Its composition with the natural map from $M^{*}$ onto $D_{M}$ clearly factors through a map from $L^{*}/M$ onto $D_{M}$, which we denote by $\pi_{M}$. Take $\tau\in\mathcal{H}$, $u^{\perp}\in\operatorname{Gr}(M_{\mathbb{R}}^{\perp})$, a polynomial $p_{u^{\perp}}$ that is homogenous of some degree that we write as $(m_{+}-n_{+},m_{-}-n_{-})$ with respect to $u^{\perp}$, and two vectors $\xi$ and $\eta$ of $M_{\mathbb{R}}^{\perp}$, and recall that the projection $\lambda_{M_{\mathbb{R}}^{\perp}}$ is well-defined for $\lambda \in L^{*}/M$. We then define the theta function \[\Theta_{L,M}\big(\tau;\textstyle{\binom{\xi}{\eta}};u^{\perp},p_{u^{\perp}}\big)\displaystyle:=y^{\frac{b_{-}-c_{-}}{2}+m_{-}-n_{-}}\sum_{\delta \in D_{M}}\sum_{\substack{\lambda \in L^{*}/M \\ \pi_{M}(\lambda)=\delta}}e^{-\frac{\Delta_{u^{\perp}}}{8\pi y}}(p_{u^{\perp}})\big(\lambda_{M_{\mathbb{R}}^{\perp}}+\eta\big)\times\]
\begin{equation}
\mathbf{e}\Bigg(\tau\frac{\big(\lambda_{M_{\mathbb{R}}^{\perp}}+\eta\big)_{u^{\perp}_{+}}^{2}}{2}+\overline{\tau}\frac{\big(\lambda_{M_{\mathbb{R}}^{\perp}}+\eta\big)_{u^{\perp}_{-}}^{2}}{2} -\big(\lambda_{M_{\mathbb{R}}^{\perp}}+\tfrac{\eta}{2},\xi\big)\Bigg)\mathfrak{e}_{\lambda+L}\otimes\mathfrak{e}_{\delta}^{*}, \label{compTheta}
\end{equation}
which we consider as $\mathbb{C}[D_{L}]\otimes_{\mathbb{C}}\mathbb{C}[D_{M(-1)}]$-valued. As in Equation \eqref{Thetadef}, we shall write just $\Theta_{L,M}\big(\tau;u^{\perp},p_{u^{\perp}}\big)$ for $\Theta_{L,M}\big(\tau;\binom{0}{0};u^{\perp},p_{u^{\perp}}\big)$. Extending the pairing $\langle\cdot,\cdot\rangle_{M}$ by allowing the second coordinate to be from $\mathbb{C}[D_{L}]\otimes_{\mathbb{C}}\mathbb{C}[D_{M(-1)}]$, so that the the pairing itself takes values in $\mathbb{C}[D_{L}]$, we have the following result.
\begin{prop}
If $\Theta_{L,M}\big(\tau;\binom{\xi}{\eta};u^{\perp},p_{u^{\perp}}\big)$ is defined as in Equation \eqref{compTheta} then we have the equality
\[\Theta_{L}\big(\tau;\textstyle{\binom{\alpha}{\beta}};v,p_{v}\big)\displaystyle=\Bigg\langle\Theta_{M}\bigg(\tau;\binom{\alpha_{M_{\mathbb{R}}}}{\beta_{M_{\mathbb{R}}}};u,p_{u}\bigg), \Theta_{L,M}\bigg(\tau;\binom{\alpha_{M_{\mathbb{R}}^{\perp}}}{\beta_{M_{\mathbb{R}}^{\perp}}};u^{\perp},p_{u^{\perp}}\bigg)\Bigg\rangle_{M}.\] \label{pairTheta}
\end{prop}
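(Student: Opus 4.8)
The plan is to expand both sides of the asserted identity in the natural basis $\{\mathfrak{e}_{\gamma}\}_{\gamma\in D_{L}}$ of $\mathbb{C}[D_{L}]$ and to match coefficients. First I would unwind the pairing on the right-hand side. Since $\langle\mathfrak{e}_{\gamma},\mathfrak{e}_{\delta}^{*}\rangle_{M}$ is the Kronecker delta on $D_{M}$, pairing $\Theta_{M}\big(\tau;\binom{\alpha_{M_{\mathbb{R}}}}{\beta_{M_{\mathbb{R}}}};u,p_{u}\big)=\sum_{\mu\in M^{*}}c_{\mu}\,\mathfrak{e}_{\mu+M}$ against the $\mathbb{C}[D_{L}]\otimes_{\mathbb{C}}\mathbb{C}[D_{M(-1)}]$-valued $\Theta_{L,M}\big(\tau;\binom{\alpha_{M_{\mathbb{R}}^{\perp}}}{\beta_{M_{\mathbb{R}}^{\perp}}};u^{\perp},p_{u^{\perp}}\big)=\sum_{\delta\in D_{M}}\sum_{\lambda\in L^{*}/M,\ \pi_{M}(\lambda)=\delta}d_{\lambda}\,\mathfrak{e}_{\lambda+L}\otimes\mathfrak{e}_{\delta}^{*}$ produces the $\mathbb{C}[D_{L}]$-valued expression $\sum_{\lambda\in L^{*}/M}\big(\sum_{\mu\in M^{*},\ \mu+M=\pi_{M}(\lambda)}c_{\mu}\big)d_{\lambda}\,\mathfrak{e}_{\lambda+L}$, where $c_{\mu}$ and $d_{\lambda}$ are the scalar summands from Equations \eqref{Thetagen} and \eqref{compTheta} respectively, the latter evaluated with $\xi=\alpha_{M_{\mathbb{R}}^{\perp}}$ and $\eta=\beta_{M_{\mathbb{R}}^{\perp}}$. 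Here $\mathfrak{e}_{\lambda+L}$, $\lambda_{M_{\mathbb{R}}^{\perp}}$, and $\pi_{M}(\lambda)$ depend only on $\lambda$ modulo $M$ because $M\subseteq L$ and $M\subseteq M_{\mathbb{R}}$, so all these sums make sense.

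The second step is to identify the index sets. Using that $M$ is primitive, so the orthogonal projection $L_{\mathbb{R}}\to M_{\mathbb{R}}$ carries $L^{*}$ onto $M^{*}$ and $M\subseteq L\subseteq L^{*}$, and that $M$ is non-degenerate, so $M\cap M_{\mathbb{R}}^{\perp}=0$, I claim that $\Lambda\mapsto(\Lambda+M,\ \Lambda_{M_{\mathbb{R}}})$ is a bijection from $L^{*}$ onto the set of pairs $(\lambda,\mu)\in(L^{*}/M)\times M^{*}$ satisfying $\pi_{M}(\lambda)=\mu+M$, and that it is compatible with reduction modulo $L$ in the first coordinate. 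Indeed, $L^{*}$ is partitioned into the cosets of $M$; within a fixed coset $\lambda$ the map $\Lambda\mapsto\Lambda_{M_{\mathbb{R}}}$ is a bijection onto the coset $\pi_{M}(\lambda)$ of $M$ inside $M^{*}$ (surjectivity since $M\subseteq L^{*}$, injectivity since two elements of the coset with equal $M_{\mathbb{R}}$-projection differ by an element of $M\cap M_{\mathbb{R}}^{\perp}=0$), while $\Lambda_{M_{\mathbb{R}}^{\perp}}$ is constant on the coset $\lambda$. Under this bijection, writing $\mu=\Lambda_{M_{\mathbb{R}}}\in M^{*}$ and $\nu=\Lambda_{M_{\mathbb{R}}^{\perp}}\in(M^{\perp}_{L})^{*}$, so that $\Lambda=\mu+\nu$ is an orthogonal decomposition, it remains to check that the $\Lambda$th scalar summand of $\Theta_{L}\big(\tau;\binom{\alpha}{\beta};v,p_{v}\big)$ equals $c_{\mu}\cdot d_{\lambda}$ and multiplies the same basis vector $\mathfrak{e}_{\Lambda+L}=\mathfrak{e}_{\lambda+L}$.

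The last step would be this local verification, which is exactly the computation already carried out in the proof of Lemma \ref{dirsum}: the outer power of $y$ factors as $y^{\frac{c_{-}}{2}+n_{-}}\cdot y^{\frac{b_{-}-c_{-}}{2}+m_{-}-n_{-}}=y^{\frac{b_{-}}{2}+m_{-}}$; since $v=u\oplus u^{\perp}$ forces $\Delta_{v}=\Delta_{u}+\Delta_{u^{\perp}}$ on the two orthogonally complementary sets of variables and $p_{v}=p_{u}p_{u^{\perp}}$ with $p_{u}$, $p_{u^{\perp}}$ depending only on the $M_{\mathbb{R}}$- resp.\ $M_{\mathbb{R}}^{\perp}$-variables, the smoothed polynomial factors as $e^{-\Delta_{v}/8\pi y}(p_{v})(\Lambda+\beta)=e^{-\Delta_{u}/8\pi y}(p_{u})(\mu+\beta_{M_{\mathbb{R}}})\cdot e^{-\Delta_{u^{\perp}}/8\pi y}(p_{u^{\perp}})(\nu+\beta_{M_{\mathbb{R}}^{\perp}})$; and orthogonality of $M_{\mathbb{R}}\perp M_{\mathbb{R}}^{\perp}$ and of $u_{\pm}\perp u^{\perp}_{\pm}$ splits both the norms, $(\Lambda+\beta)_{v_{\pm}}^{2}=(\mu+\beta_{M_{\mathbb{R}}})_{u_{\pm}}^{2}+(\nu+\beta_{M_{\mathbb{R}}^{\perp}})_{u^{\perp}_{\pm}}^{2}$, and the linear pairing, $\big(\Lambda+\tfrac{\beta}{2},\alpha\big)=\big(\mu+\tfrac{\beta_{M_{\mathbb{R}}}}{2},\alpha_{M_{\mathbb{R}}}\big)+\big(\nu+\tfrac{\beta_{M_{\mathbb{R}}^{\perp}}}{2},\alpha_{M_{\mathbb{R}}^{\perp}}\big)$. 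Hence $\mathbf{e}$ of the combined argument factors as the product of the two, so the $\Lambda$th summand of $\Theta_{L}$ is precisely $c_{\mu}d_{\lambda}\,\mathfrak{e}_{\lambda+L}$, and summing over the bijection of the second step yields the proposition. I expect no genuine obstacle: the only point demanding care is the index bijection, where primitivity of $M$ provides the surjection $L^{*}\twoheadrightarrow M^{*}$ and non-degeneracy of $M$ provides $M\cap M_{\mathbb{R}}^{\perp}=0$; everything else is the bookkeeping already performed for Lemma \ref{dirsum}.
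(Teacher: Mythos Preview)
Your proof is correct and follows essentially the same approach as the paper's: both decompose each $\Lambda\in L^{*}$ via its projections onto $M_{\mathbb{R}}$ and $M_{\mathbb{R}}^{\perp}$, invoke the factorization of the scalar summand from Lemma~\ref{dirsum}, and then reorganize the sum over $L^{*}$ as a double sum over $L^{*}/M$ and the coset $\pi_{M}(\lambda)\subseteq M^{*}$, which is exactly the structure of the pairing. Your version is simply more explicit about the index bijection and about where primitivity and non-degeneracy of $M$ enter.
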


\begin{proof}
As in the proof of Lemma \ref{dirsum}, the scalar summand associated with $\lambda \in L^{*}$ in Equation \eqref{Thetagen} is the product of the summands associated with the projections $\lambda_{M_{\mathbb{R}}}$ and $\lambda_{M_{\mathbb{R}}^{\perp}}$, and we separate the sum according to the value $\mu \in M^{*}$ of $\lambda_{M_{\mathbb{R}}}$. We gather the values of $\mu$ together, and as the projection $\lambda_{M_{\mathbb{R}}}$ and the image in $D_{L}$ are invariant under adding an element of $M$, we can replace the sum over $\lambda \in L^{*}$ (or $\lambda$ in a coset $L+\gamma$) that project to $\mu \in M+\delta \subseteq M^{*}$ for $\delta \in D_{M}$ by a sum over $L^{*}/M$. All this expresses $\Theta_{L}\big(\tau;\binom{\alpha}{\beta};v,p_{v}\big)$ as $y^{\frac{b_{-}}{2}+m_{-}}$ times the sum over $\delta \in D_{M}$ of the coefficient multiplying $\mathfrak{e}_{\delta}$ in Equation \eqref{Thetagen} for $M$ (with the projections $\alpha_{M_{\mathbb{R}}}$ and $\beta_{M_{\mathbb{R}}}$) times the $\mathbb{C}[D_{L}]$-valued coefficient of $\mathfrak{e}_{\delta}^{*}$ in Equation \eqref{compTheta}. But this gives precisely the asserted pairing. This proves the proposition.
\end{proof}
As one example, Lemma 4.16 of \cite{[BZ]} is a special case of Proposition \ref{pairTheta}.

We now turn our attention to the behavior of the theta function from Equation \eqref{pairTheta}. We begin, once again, with the case of a direct sum of lattices.
\begin{lem}
Assume that $L=M \oplus M^{\perp}_{L}$, and take $u^{\perp}\in\operatorname{Gr}(M_{\mathbb{R}}^{\perp})$, a polynomial $p_{u^{\perp}}$, and vectors $\xi$ and $\eta$ in $M_{\mathbb{R}}^{\perp}$. Then we have the equality \[\Theta_{L,M}\big(\tau;\textstyle{\binom{\xi}{\eta}};u^{\perp},p_{u^{\perp}}\big)=\Theta_{M^{\perp}_{L}}\big(\tau;\textstyle{\binom{\xi}{\eta}};u^{\perp},p_{u^{\perp}}\big)\displaystyle\otimes\bigg(\sum_{\delta \in D_{M}}\mathfrak{e}_{\delta}\otimes\mathfrak{e}_{\delta}^{*}\bigg).\] \label{decomsum}
\end{lem}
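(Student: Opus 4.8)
The plan is to run the same direct-sum bookkeeping as in the proof of Lemma \ref{dirsum}, now keeping track of the extra tensor factor $\mathfrak{e}_{\delta}^{*}$ coming from Equation \eqref{compTheta}. The hypothesis $L=M\oplus M^{\perp}_{L}$ gives $L^{*}=M^{*}\oplus(M^{\perp}_{L})^{*}$, hence $L^{*}/M=D_{M}\oplus(M^{\perp}_{L})^{*}$ and $D_{L}=D_{M}\oplus D_{M^{\perp}_{L}}$, so a class $\lambda\in L^{*}/M$ is uniquely a pair $(\delta',\nu)$ with $\delta'\in D_{M}$ and $\nu\in(M^{\perp}_{L})^{*}$. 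Since the splitting $L_{\mathbb{R}}=M_{\mathbb{R}}\oplus M_{\mathbb{R}}^{\perp}$ is orthogonal, the projections are $\lambda_{M_{\mathbb{R}}}=\mu$ (for any $\mu\in M^{*}$ lifting $\delta'$) and $\lambda_{M_{\mathbb{R}}^{\perp}}=\nu$; in particular $\pi_{M}(\lambda)=\delta'$, and the image of $\lambda$ in $D_{L}$ is $(\delta',\overline{\nu})$, where $\overline{\nu}$ denotes the class of $\nu$ in $D_{M^{\perp}_{L}}$, so $\mathfrak{e}_{\lambda+L}=\mathfrak{e}_{\delta'}\otimes\mathfrak{e}_{\overline{\nu}}$ under $\mathbb{C}[D_{L}]=\mathbb{C}[D_{M}]\otimes\mathbb{C}[D_{M^{\perp}_{L}}]$.

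Substituting this, the inner constraint $\pi_{M}(\lambda)=\delta$ in Equation \eqref{compTheta} becomes simply $\delta'=\delta$, so for each $\delta$ the inner sum is an unconstrained sum over $\nu\in(M^{\perp}_{L})^{*}$, and the vector attached to the summand, after the canonical reordering of tensor factors, is $\mathfrak{e}_{\overline{\nu}}\otimes\big(\mathfrak{e}_{\delta}\otimes\mathfrak{e}_{\delta}^{*}\big)$. All the scalar data in the summand — the value $e^{-\frac{\Delta_{u^{\perp}}}{8\pi y}}(p_{u^{\perp}})(\nu+\eta)$ and the exponential $\mathbf{e}\big(\tau\tfrac{(\nu+\eta)_{u^{\perp}_{+}}^{2}}{2}+\overline{\tau}\tfrac{(\nu+\eta)_{u^{\perp}_{-}}^{2}}{2}-(\nu+\tfrac{\eta}{2},\xi)\big)$ — depend on $\nu$ only, and the overall power $y^{\frac{b_{-}-c_{-}}{2}+m_{-}-n_{-}}$ is precisely the prefactor appearing in Equation \eqref{Thetagen} for the lattice $M^{\perp}_{L}$, of signature $(b_{+}-c_{+},b_{-}-c_{-})$, with the polynomial $p_{u^{\perp}}$, which is homogeneous of degree $(m_{+}-n_{+},m_{-}-n_{-})$ with respect to $u^{\perp}$. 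Hence the double sum factors as $\big(\sum_{\nu\in(M^{\perp}_{L})^{*}}(\cdots)\mathfrak{e}_{\overline{\nu}}\big)\otimes\sum_{\delta\in D_{M}}\mathfrak{e}_{\delta}\otimes\mathfrak{e}_{\delta}^{*}$ (the first factor being a convergent theta sum and the second a finite sum, so the factorization is immediate), and the first factor is exactly $\Theta_{M^{\perp}_{L}}\big(\tau;\binom{\xi}{\eta};u^{\perp},p_{u^{\perp}}\big)$ by Equation \eqref{Thetagen} applied to $M^{\perp}_{L}$. This yields the asserted identity.

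There is no genuine obstacle here; the proof is a routine unpacking of the definitions in the spirit of Lemma \ref{dirsum}. The only point that deserves attention is the mechanism producing the diagonal vector $\sum_{\delta\in D_{M}}\mathfrak{e}_{\delta}\otimes\mathfrak{e}_{\delta}^{*}$: along the inner sum the $D_{M}$-component of $\lambda+L\in D_{L}$ is forced to equal the outer summation index $\delta=\pi_{M}(\lambda)$, which is where the orthogonality of the direct sum (and the very definition of $\pi_{M}$) enters. One should also note that the operator $\Delta_{u^{\perp}}$ in Equation \eqref{compTheta} is literally the majorant Laplacian on $M_{\mathbb{R}}^{\perp}$ attached to $u^{\perp}$, so it coincides with the operator used in the definition of $\Theta_{M^{\perp}_{L}}$, and hence the polynomial and exponential factors match termwise.
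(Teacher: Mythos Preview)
Your proof is correct and follows essentially the same approach as the paper's: both arguments decompose $\lambda\in L^{*}/M$ via the direct-sum hypothesis, observe that the scalar summand in Equation \eqref{compTheta} depends only on the $(M^{\perp}_{L})^{*}$-component $\nu$, and that the constraint $\pi_{M}(\lambda)=\delta$ forces the $D_{M}$-part of $\lambda+L$ to equal $\delta$, producing the diagonal tensor. Your write-up is somewhat more explicit (in particular, spelling out $L^{*}/M=D_{M}\oplus(M^{\perp}_{L})^{*}$ and the identification $\mathfrak{e}_{\lambda+L}=\mathfrak{e}_{\delta'}\otimes\mathfrak{e}_{\overline{\nu}}$), but the substance is identical.
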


\begin{proof}
As in the proof of Lemma \ref{dirsum}, we write $\lambda \in L^{*}$ the sum of $\mu \in M^{*}$ and $\nu\in(M^{\perp}_{L})^{*}$, and then the parameter $\lambda_{M_{\mathbb{R}}^{\perp}}$ appearing in Equation \eqref{pairTheta} is just $\nu$. Hence the scalar summand associated with $\lambda+M$ in that equation is just the one associated with $\nu$ in Equation \eqref{Thetagen} for $M^{\perp}_{L}$. Moreover, the projection $\pi_{M}(\lambda+M)$ is just the coset $\delta$ such that $\mu \in M+\delta$, so that the $\mathbb{C}[D_{L}]$-valued coefficient of $\mathfrak{e}_{\delta}$ is based only on those $\lambda$ in cosets $L+\gamma$ for which the $D_{M}$-part is $\delta$. As the scalar summand depends only on $\lambda_{M_{\mathbb{R}}^{\perp}}=\nu$, we indeed obtain an expression depending only on elements of $(M^{\perp}_{L})^{*}$ tensored with $\sum_{\delta \in D_{M}}\mathfrak{e}_{\delta}\otimes\mathfrak{e}_{\delta}^{*}$, and the previous expression was seen to be $\Theta_{M^{\perp}_{L}}\big(\tau;\binom{\xi}{\eta};u^{\perp},p_{u^{\perp}}\big)$. This proves the lemma.
\end{proof}

We shall also use the following analogue of Lemma \ref{downTheta}, whose proof is exactly the same as that of that lemma (but with the parameters being from $M_{\mathbb{R}}^{\perp}$ and its Grassmannian and the additional, constant vectors taken from $\mathbb{C}[D_{M(-1)}]$).
\begin{lem}
Assume that $L$ is an over-lattice of a lattice $\Lambda$, and that $M$ is a primitive non-degenerate sub-lattice of $\Lambda$ that remains primitive in $L$. Then $\Theta_{L,M}\big(\tau;\binom{\xi}{\eta};u^{\perp},p_{u^{\perp}}\big)=\downarrow^{L \oplus M(-1)}_{\Lambda \oplus M(-1)}\Theta_{\Lambda,M}\big(\tau;\binom{\xi}{\eta};u^{\perp},p_{u^{\perp}}\big)$. \label{Thetadown}
\end{lem}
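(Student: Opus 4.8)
The plan is to repeat the argument from the proof of Lemma~\ref{downTheta} verbatim, once the operator $\downarrow^{L \oplus M(-1)}_{\Lambda \oplus M(-1)}$ has been identified concretely on $\mathbb{C}[D_{L}] \otimes_{\mathbb{C}} \mathbb{C}[D_{M(-1)}]$. First I would observe that $L \oplus M(-1)$ is an over-lattice of $\Lambda \oplus M(-1)$, with the same underlying real space, and that the corresponding isotropic subgroup of $D_{\Lambda \oplus M(-1)} = D_{\Lambda} \oplus D_{M}$ is $H' := H \oplus \{0\}$, where $H := L/\Lambda \subseteq D_{\Lambda}$. Since the bilinear form on $D_{\Lambda} \oplus D_{M}$ is the orthogonal direct sum of those on the two summands, we get $(H')^{\perp} = H^{\perp} \oplus D_{M}$, hence $D_{L \oplus M(-1)} = (H')^{\perp}/H' = (H^{\perp}/H) \oplus D_{M} = D_{L} \oplus D_{M}$, compatibly with the identification $\mathbb{C}[D_{L \oplus M(-1)}] = \mathbb{C}[D_{L}] \otimes_{\mathbb{C}} \mathbb{C}[D_{M(-1)}]$ implicit in the statement. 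Under these identifications, Equation~\eqref{downarrowdef} shows immediately that $\downarrow^{L \oplus M(-1)}_{\Lambda \oplus M(-1)}$ is the map $\downarrow^{L}_{\Lambda} \otimes \operatorname{Id}_{\mathbb{C}[D_{M(-1)}]}$, sending $\mathfrak{e}_{\gamma} \otimes \mathfrak{e}_{\delta}^{*}$ to $\mathfrak{e}_{\gamma+H} \otimes \mathfrak{e}_{\delta}^{*}$ when $\gamma \in H^{\perp}$ and to $0$ otherwise.

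Next I would compare the scalar summands of the two theta functions of Equation~\eqref{compTheta}. All the ingredients of that formula other than the lattice itself --- namely $\tau$, the space $M_{\mathbb{R}}^{\perp}$ inside $\Lambda_{\mathbb{R}} = L_{\mathbb{R}}$, the element $u^{\perp}$, the polynomial $p_{u^{\perp}}$, the vectors $\xi$ and $\eta$, and, since over-lattices share the signature $(b_{+},b_{-})$, the external power of $y$ --- are the same for $\Theta_{L,M}$ and for $\Theta_{\Lambda,M}$. Because $M$ remains primitive in $L$, the orthogonal projection $L_{\mathbb{R}} \to M_{\mathbb{R}}$ carries $L^{*}$ onto $M^{*}$ (it is dual to $M \hookrightarrow L$), so $\pi_{M}$ is defined on $L^{*}/M$ and is just the restriction of the corresponding map on $\Lambda^{*}/M$ along the natural injection $L^{*}/M \hookrightarrow \Lambda^{*}/M$ (injective since $M \subseteq L \subseteq L^{*} \subseteq \Lambda^{*}$). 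Hence the scalar coefficient of $\mathfrak{e}_{\lambda+L} \otimes \mathfrak{e}_{\pi_{M}(\lambda)}^{*}$ appearing in $\Theta_{L,M}$ equals the scalar coefficient of $\mathfrak{e}_{\lambda+\Lambda} \otimes \mathfrak{e}_{\pi_{M}(\lambda)}^{*}$ appearing in $\Theta_{\Lambda,M}$, since it depends only on $\lambda_{M_{\mathbb{R}}^{\perp}}$ (well-defined modulo $M$) and on the common parameters.

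Finally I would apply $\downarrow^{L}_{\Lambda} \otimes \operatorname{Id}$ to $\Theta_{\Lambda,M}\big(\tau;\binom{\xi}{\eta};u^{\perp},p_{u^{\perp}}\big)$ term by term. The term indexed by $\lambda+M \in \Lambda^{*}/M$ survives precisely when $\lambda+\Lambda \in H^{\perp}$, that is, exactly when $\lambda \in L^{*}$ (recall that $L^{*}$ is the preimage of $H^{\perp}$ in $\Lambda^{*}$); for such $\lambda$ the factor $\mathfrak{e}_{\lambda+\Lambda}$ becomes $\mathfrak{e}_{\lambda+L}$ while the $\mathbb{C}[D_{M(-1)}]$-factor $\mathfrak{e}_{\pi_{M}(\lambda)}^{*}$ is untouched. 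The classes $\lambda+M$ with $\lambda \in L^{*}$ are exactly the image of $L^{*}/M \hookrightarrow \Lambda^{*}/M$, so the surviving sum is precisely the one defining $\Theta_{L,M}\big(\tau;\binom{\xi}{\eta};u^{\perp},p_{u^{\perp}}\big)$ in Equation~\eqref{compTheta}, the gathering of the coefficients of a given $\mathfrak{e}_{\delta}^{*}$ being unaffected by all this. I expect the only non-automatic point to be the identification of $\downarrow^{L \oplus M(-1)}_{\Lambda \oplus M(-1)}$ with $\downarrow^{L}_{\Lambda} \otimes \operatorname{Id}$ --- that is, checking $(H')^{\perp} = H^{\perp} \oplus D_{M}$ and that the over-lattice of $\Lambda \oplus M(-1)$ attached to $H'$ is $L \oplus M(-1)$; the hypothesis that $M$ remains primitive in $L$ enters only to make $\pi_{M}$ meaningful over $L^{*}$, and everything else is the bookkeeping already carried out in Lemma~\ref{downTheta}.
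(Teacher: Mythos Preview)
Your proposal is correct and follows exactly the approach the paper intends: the paper simply states that the proof is the same as that of Lemma~\ref{downTheta}, with the parameters now taken from $M_{\mathbb{R}}^{\perp}$ and the extra constant vectors from $\mathbb{C}[D_{M(-1)}]$. Your explicit identification of $\downarrow^{L \oplus M(-1)}_{\Lambda \oplus M(-1)}$ with $\downarrow^{L}_{\Lambda}\otimes\operatorname{Id}_{\mathbb{C}[D_{M(-1)}]}$ via $H'=H\oplus\{0\}$ and $(H')^{\perp}=H^{\perp}\oplus D_{M}$ is precisely the content the paper leaves to the reader.
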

We can now determine the desired behavior of the theta function from Equation \eqref{compTheta}.
\begin{thm}
For an even lattice $L$ of signature $(b_{+},b_{-})$, a primitive non-degenerate sub-lattice $M$ of $L$ having signature $(c_{+},c_{-})$, an element $u^{\perp}$ in the Grassmannian $\operatorname{Gr}(M_{\mathbb{R}}^{\perp})$ from Equation \eqref{Grassdef} for the lattice $M^{\perp}_{L}$, a polynomial $p_{u^{\perp}}$ on $M_{\mathbb{R}}^{\perp}$ that is homogenous of degree $(m_{+}-n_{+},m_{-}-n_{-})$ with respect to $u^{\perp}$, two vectors $\xi$ and $\eta$ in $M_{\mathbb{R}}^{\perp}$, and $\tau\in\mathcal{H}$, let $\Theta_{L,M}\big(\tau;\binom{\xi}{\eta};u^{\perp},p_{u^{\perp}}\big)$ be the function defined in Equation \eqref{compTheta}. Then if $(A,\phi)$ is in $\operatorname{Mp}_{2}(\mathbb{Z})$ then the value of $\Theta_{L,M}\big(A\tau;\textstyle{A\binom{\xi}{\eta}};u^{\perp},p_{u^{\perp}}\big)$ coincides with \[\phi(\tau)^{b_{+}-c_{+}-b_{-}+c_{-}+2m_{+}-2n_{+}-2m_{-}+2n_{-}} (\rho_{L}\otimes\rho_{M}^{*})\big((A,\phi)\big)\Theta_{L,M}\big(\tau;\textstyle{\binom{\xi}{\eta}};u^{\perp},p_{u^{\perp}}\big).\] \label{modcompTheta}
\end{thm}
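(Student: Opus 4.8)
The plan is to reduce the statement to the case where $L$ is the orthogonal direct sum $M\oplus M^{\perp}_{L}$; there it follows at once from the transformation law of an ordinary generalized Siegel theta function (Theorem~\ref{modTheta}) together with the invariance of a single distinguished vector, and the passage from this case to the general one is handled by the down-arrow operators of Equation~\eqref{downarrowdef}.

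\emph{Step 1: the direct sum case.} Suppose $L=M\oplus M^{\perp}_{L}$. Then $D_{L}=D_{M}\oplus D_{M^{\perp}_{L}}$ and $\rho_{L}=\rho_{M}\otimes\rho_{M^{\perp}_{L}}$, so after a harmless reordering of tensor factors $\rho_{L}\otimes\rho_{M}^{*}=\rho_{M^{\perp}_{L}}\otimes(\rho_{M}\otimes\rho_{M}^{*})$. By Lemma~\ref{decomsum}, the function $\Theta_{L,M}\big(\tau;\binom{\xi}{\eta};u^{\perp},p_{u^{\perp}}\big)$ is $\Theta_{M^{\perp}_{L}}\big(\tau;\binom{\xi}{\eta};u^{\perp},p_{u^{\perp}}\big)$ tensored with the fixed vector $w:=\sum_{\delta\in D_{M}}\mathfrak{e}_{\delta}\otimes\mathfrak{e}_{\delta}^{*}\in\mathbb{C}[D_{M}]\otimes\mathbb{C}[D_{M(-1)}]$. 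Under the identification $\mathbb{C}[D_{M}]\otimes\mathbb{C}[D_{M(-1)}]\cong\operatorname{End}\big(\mathbb{C}[D_{M}]\big)$ the vector $w$ is the identity endomorphism, on which $\rho_{M}\otimes\rho_{M}^{*}$ acts by conjugation; hence $w$ is $\operatorname{Mp}_{2}(\mathbb{Z})$-invariant and contributes neither to the weight nor to the representation. Applying Theorem~\ref{modTheta} to the lattice $M^{\perp}_{L}$ — which has signature $(b_{+}-c_{+},b_{-}-c_{-})$ and carries the polynomial $p_{u^{\perp}}$, homogeneous of degree $(m_{+}-n_{+},m_{-}-n_{-})$ with respect to $u^{\perp}$ — yields the transformation of $\Theta_{M^{\perp}_{L}}$ under $\rho_{M^{\perp}_{L}}$ with the automorphy factor $\phi(\tau)$ raised to the exponent $b_{+}-c_{+}-b_{-}+c_{-}+2m_{+}-2n_{+}-2m_{-}+2n_{-}$. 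Tensoring with the invariant vector $w$ gives exactly the asserted identity in this case.

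\emph{Step 2: the general case.} For an arbitrary primitive non-degenerate $M\subseteq L$ set $\Lambda:=M\oplus M^{\perp}_{L}$. Since $M$ is non-degenerate we have $\Lambda_{\mathbb{R}}=M_{\mathbb{R}}\oplus M_{\mathbb{R}}^{\perp}=L_{\mathbb{R}}$, so $L$ is an even over-lattice of $\Lambda$; moreover $M$ is primitive in $\Lambda$ (being a direct summand) and, by hypothesis, primitive in $L$. Hence Lemma~\ref{Thetadown} with this $\Lambda$ applies and gives
\[\Theta_{L,M}\big(\tau;\textstyle{\binom{\xi}{\eta}};u^{\perp},p_{u^{\perp}}\big)=\downarrow^{L\oplus M(-1)}_{\Lambda\oplus M(-1)}\Theta_{\Lambda,M}\big(\tau;\textstyle{\binom{\xi}{\eta}};u^{\perp},p_{u^{\perp}}\big).\]
The isotropic subgroup $H:=L/\Lambda\subseteq D_{\Lambda}$ sits inside $D_{\Lambda\oplus M(-1)}=D_{\Lambda}\oplus D_{M(-1)}$ orthogonally to the second summand, and so defines the over-lattice $L\oplus M(-1)$ of $\Lambda\oplus M(-1)$; by Lemma~\ref{arrowprop} the operator $\downarrow^{L\oplus M(-1)}_{\Lambda\oplus M(-1)}$ is therefore a morphism of $\operatorname{Mp}_{2}(\mathbb{Z})$-representations from $\rho_{\Lambda}\otimes\rho_{M}^{*}$ to $\rho_{L}\otimes\rho_{M}^{*}$. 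Applying such a morphism to a function obeying the transformation law of the theorem (for a fixed weight and source representation) produces a function obeying the same law with the target representation and the same weight — this is exactly the formal argument behind Corollary~\ref{arrowsmod}. Since $\Lambda$ and $L$ share the signature $(b_{+},b_{-})$ and $M^{\perp}_{\Lambda}=M^{\perp}_{L}$, Step~1 applied to $\Lambda$ gives the transformation of $\Theta_{\Lambda,M}$ under $\rho_{\Lambda}\otimes\rho_{M}^{*}$ with precisely the weight in the statement, and the claim for $\Theta_{L,M}$ follows.

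\emph{Main obstacle.} There is no genuine analytic difficulty here; the single point that requires care is the representation-theoretic bookkeeping. One must verify that $w=\sum_{\delta}\mathfrak{e}_{\delta}\otimes\mathfrak{e}_{\delta}^{*}$ is truly $\operatorname{Mp}_{2}(\mathbb{Z})$-invariant for $\rho_{M}\otimes\rho_{M}^{*}$ — so that the $\mathbb{C}[D_{M}]\otimes\mathbb{C}[D_{M(-1)}]$ tensor factor carries the trivial sub-representation of weight $0$ and does not disturb the weight — and that the composite operator in Lemma~\ref{Thetadown} is indeed the equivariant down-arrow attached to an honest isotropic subgroup of the discriminant form of the relevant direct sum. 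Once these identifications are in place, Steps~1 and~2 are purely formal.
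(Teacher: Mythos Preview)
Your proof is correct and follows essentially the same two-step strategy as the paper: establish the direct-sum case via Lemma~\ref{decomsum} and Theorem~\ref{modTheta}, then deduce the general case from Lemma~\ref{Thetadown} and Corollary~\ref{arrowsmod}. The only cosmetic difference is that the paper verifies the $\rho_{M}\otimes\rho_{M}^{*}$-invariance of $w=\sum_{\delta}\mathfrak{e}_{\delta}\otimes\mathfrak{e}_{\delta}^{*}$ by a direct check on the generators $T$ and $S$, whereas you invoke the identification with $\operatorname{Id}_{\mathbb{C}[D_{M}]}$ (an argument the paper itself records in Remark~\ref{IdinEnd}).
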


\begin{proof}
Assume first that $L=M \oplus M^{\perp}_{L}$, where Lemma \ref{decomsum} expresses our theta function as a tensor product, and we claim that the constant vector from that lemma is invariant under the representation $\rho_{M}\otimes\rho_{M}^{*}$. It suffices to check on the generators $T$ and $S$ of the group $\operatorname{Mp}_{2}(\mathbb{Z})$ via the formulae from Equation \eqref{Weildef}. As for each $\delta \in D_{M}$, the vectors $\mathfrak{e}_{\delta}$ and $\mathfrak{e}_{\delta^{*}}$ are eigenvectors of $\rho_{M}(T)$ and $\rho_{M}^{*}(T)$ with multiplicative inverse eigenvalues, we deduce that $\mathfrak{e}_{\delta}\otimes\mathfrak{e}_{\delta}^{*}$ is invariant under $(\rho_{M}\otimes\rho_{M}^{*})(T)$, hence so is the sum. Now, since the signatures of $M$ and $M(-1)$ are opposite and $|D_{M}|=|D_{M(-1)}|$, we deduce that \[(\rho_{M}\otimes\rho_{M}^{*})(S)\bigg(\sum_{\delta \in D_{M}}\mathfrak{e}_{\delta}\otimes\mathfrak{e}_{\delta}^{*}\bigg)=\frac{1}{|D_{M}|}\sum_{\delta \in D_{M}}\sum_{\varepsilon \in D_{M}}\sum_{\epsilon \in D_{M}}\mathbf{e}\big((\delta,\epsilon-\varepsilon)\big)\cdot\mathfrak{e}_{\varepsilon}\otimes\mathfrak{e}_{\epsilon}^{*}.\] But after interchanging the order of summation, the inner sum over $\delta$ yields $|D_{M}|$ in case $\epsilon=\varepsilon$ and 0 otherwise, so that the right hand side reduces to the original vector $\sum_{\varepsilon \in D_{M}}\mathfrak{e}_{\varepsilon}\otimes\mathfrak{e}_{\varepsilon}^{*}$, as claimed. As Theorem \ref{modTheta} identifies $\Theta_{M^{\perp}_{L}}\big(A\tau;A\binom{\xi}{\eta};u^{\perp},p_{u^{\perp}}\big)$ with $\rho_{M^{\perp}_{L}}\big((A,\phi)\big)\Theta_{M^{\perp}_{L}}\big(\tau;\binom{\xi}{\eta};u^{\perp},p_{u^{\perp}}\big)$ times the required power of $\phi(\tau)$, we can tensor the first expression with $\sum_{\delta \in D_{M}}\mathfrak{e}_{\delta}\otimes\mathfrak{e}_{\delta}^{*}$ and the second one with its image under $(\rho_{M}\otimes\rho_{M}^{*})\big((A,\phi)\big)$ (which we have just seen to be the same), and obtain the desired equality because the asserted representation is $(\rho_{M^{\perp}_{L}}\otimes\rho_{M})\otimes\rho_{M}^{*}$. This yields the asserted equality in case $L=M \oplus M^{\perp}_{L}$.

In general, denote $M \oplus M^{\perp}_{L}$ by $\Lambda$, and then $L$ is an over-lattice of $\Lambda$ and we have proved the desired property for $\Theta_{\Lambda,M}$. But then we can express $\Theta_{L,M}$ using Lemma \ref{Thetadown}, and the assertion for it follows from that for $\Theta_{\Lambda,M}$ via Corollary \ref{arrowsmod}. This proves the theorem.
\end{proof}
In particular, Theorem \ref{modcompTheta} implies that for fixed $u^{\perp}$ and $p_{u^{\perp}}$, the expression $\Theta_{L,M}(\tau;u^{\perp},p_{u^{\perp}})$ is modular of weight $\frac{b_{+}-c_{+}-b_{-}+c_{-}}{2}+m_{+}-n_{+}-m_{-}+n_{-}$ and representation $\rho_{L}\otimes\rho_{M}^{*}$. Moreover, when $M^{\perp}$ is positive definite (so that $u^{\perp}$ is trivial, $c_{-}=b_{-}$, and $n_{-}=m_{-}$) and $p=p_{u^{\perp}}$ is harmonic, this is a \emph{holomorphic} modular form of weight $\frac{b_{+}-c_{+}}{2}+m_{+}-n_{+}$ and representation $\rho_{L}\otimes\rho_{M}^{*}$ (since so is $\Theta_{M^{\perp}_{L}}(\tau;u^{\perp},p_{u^{\perp}})$, and then use Lemmas \ref{decomsum} and \ref{Thetadown}). The case where $b_{+}=n-1$, $b_{-}=c_{-}=1$, $c_{+}=0$, and $p=1$ (so that $m_{+}=n_{+}=0$) produces Lemma 4.17 of \cite{[BZ]}, or equivalently the modularity assertion in Theorem 10.6 of \cite{[B]}. We remark that similar results hold for theta functions that are valued in differential forms, as in Sec 5 of \cite{[KM]} and other references.

\section{Restrictions of Theta Lifts \label{ResLift}}

The fact that the pairing $\langle\cdot,\cdot\rangle_{M}$ between $\mathbb{C}[D_{M}]$ and $\mathbb{C}[D_{M(-1)}]\cong\mathbb{C}[D_{M}]$ is a pairing of dual Weil representations immediately produces the following result.
\begin{lem}
Let $F$ and $G$ be (not necessarily holomorphic) modular forms of weights $k$ and $l$ and representations $\eta\otimes\rho_{M}$ and $\omega\otimes\rho_{M(-1)}$ respectively. Then the function $\tau\mapsto\big\langle F(\tau),G(\tau)\big\rangle_{M}$ transforms like a modular form of weight $k+l$ and representation $\eta\otimes\omega$. \label{pairMF}
\end{lem}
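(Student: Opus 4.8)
The plan is to reduce everything to a single structural fact: the pairing $\langle\cdot,\cdot\rangle_{M}$ is invariant under the diagonal action of $\operatorname{Mp}_{2}(\mathbb{Z})$ that acts through $\rho_{M}$ on the first argument and through $\rho_{M(-1)}=\rho_{M}^{*}$ on the second. This is immediate from the way $\mathbb{C}[D_{M(-1)}]$ was introduced above: its distinguished basis $\{\mathfrak{e}_{\delta}^{*}\}$ is declared dual to $\{\mathfrak{e}_{\delta}\}$, so $\langle\cdot,\cdot\rangle_{M}$ is just the canonical evaluation pairing between a space and its dual, and $\rho_{M}^{*}$ is by definition the contragredient of $\rho_{M}$, which is precisely the representation making this pairing invariant. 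Thus for every $(A,\phi)\in\operatorname{Mp}_{2}(\mathbb{Z})$, every $u\in\mathbb{C}[D_{M}]$ and every $w\in\mathbb{C}[D_{M(-1)}]$ one has $\big\langle\rho_{M}\big((A,\phi)\big)u,\rho_{M}^{*}\big((A,\phi)\big)w\big\rangle_{M}=\langle u,w\rangle_{M}$. (One could alternatively verify this on the generators $T$ and $S$ via Equation \eqref{Weildef}, exactly as in the computation inside the proof of Theorem \ref{modcompTheta}, but the abstract route makes it a tautology.)

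Next I would fix the bookkeeping of tensor factors. Writing $V_{\eta}$ and $V_{\omega}$ for the spaces carrying $\eta$ and $\omega$, the form $F$ takes values in $V_{\eta}\otimes\mathbb{C}[D_{M}]$ and $G$ in $V_{\omega}\otimes\mathbb{C}[D_{M(-1)}]$, and the extended pairing contracts only the $\mathbb{C}[D_{M}]$-factor of the first slot against the $\mathbb{C}[D_{M(-1)}]$-factor of the second, producing a value in $V_{\eta}\otimes V_{\omega}$. Hence $\eta$ and $\omega$ act on factors that are "spectators" for $\langle\cdot,\cdot\rangle_{M}$ and simply pass through it: for operators $a\otimes b$ and $c\otimes d$ with $a\in\operatorname{End}(V_{\eta})$, $c\in\operatorname{End}(V_{\omega})$ and $b,d$ acting on the $D_{M}$-spaces, $\big\langle (a\otimes b)x,(c\otimes d)y\big\rangle_{M}=(a\otimes c)\langle (1\otimes b)x,(1\otimes d)y\rangle_{M}$.

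With these two facts the computation is a one-liner. I would write the modularity of $F$ and $G$ as $F(A\tau)=\phi(\tau)^{2k}(\eta\otimes\rho_{M})\big((A,\phi)\big)F(\tau)$ and $G(A\tau)=\phi(\tau)^{2l}(\omega\otimes\rho_{M(-1)})\big((A,\phi)\big)G(\tau)$ (using the weight convention of Theorem \ref{modTheta}, where weight $w$ contributes $\phi(\tau)^{2w}$), substitute into $\langle F(A\tau),G(A\tau)\rangle_{M}$, pull the scalar $\phi(\tau)^{2k+2l}=\phi(\tau)^{2(k+l)}$ out of the bilinear pairing, move the $\eta$- and $\omega$-parts outside by the spectator remark, and cancel the $\rho_{M}$- and $\rho_{M}^{*}$-parts by the invariance of $\langle\cdot,\cdot\rangle_{M}$. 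What is left is $\phi(\tau)^{2(k+l)}(\eta\otimes\omega)\big((A,\phi)\big)\langle F(\tau),G(\tau)\rangle_{M}$, which is exactly the transformation law of weight $k+l$ and representation $\eta\otimes\omega$. There is no real obstacle; the only point requiring a little care is that $\langle\cdot,\cdot\rangle_{M}$ is bilinear rather than sesquilinear — this is precisely why the paper works with $\mathbb{C}[D_{M(-1)}]$ and the evaluation pairing instead of an inner product, as discussed in Remark \ref{innprod} — so that the scalars emerge without conjugation and the automorphy factors of $F$ and $G$ genuinely multiply to give $\phi(\tau)^{2(k+l)}$.
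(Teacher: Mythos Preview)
Your proof is correct and matches the paper's approach exactly: the paper does not give a formal proof of this lemma but simply asserts that it ``immediately'' follows from $\langle\cdot,\cdot\rangle_{M}$ being the pairing between dual Weil representations, and your argument is precisely the unpacking of that one-line justification.
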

We shall also use the behavior of these pairings with respect to some simple operations.
\begin{lem}
Let $M$ and $L$ be two lattices, the latter being is an over-lattice of a lattice $\Lambda$. Take elements $U\in\mathbb{C}[D_{L(-1)}]$, $V\in\mathbb{C}[D_{M}]$, $W\in\mathbb{C}[D_{L} \oplus D_{M(-1)}]$, and $X\in\mathbb{C}[D_{\Lambda}]$. Then we have $\big\langle\downarrow^{L}_{\Lambda}X,U\big\rangle_{L}=\big\langle X,\uparrow^{L(-1)}_{\Lambda(-1)}U \big\rangle_{\Lambda}$, as well as \[\big\langle\langle V,W\rangle_{M},U\big\rangle_{L}=\langle W,U \otimes V\rangle_{L \oplus M(-1)}=\big\langle V,\langle W,U\rangle_{L}\big \rangle_{M}.\] Moreover, the arrow operators commute with the pairing associated with $M$, i.e., if in addition we have $Y\in\mathbb{C}[D_{\Lambda} \oplus D_{M(-1)}]$ then we get the equalities \[\big\langle V,\uparrow^{L \oplus M(-1)}_{\Lambda \oplus M(-1)}W\big\rangle_{M}=\uparrow^{L}_{\Lambda}\langle V,W\rangle_{M}\quad\mathrm{and}\quad\big\langle V,\downarrow^{L \oplus M(-1)}_{\Lambda \oplus M(-1)}Y\big\rangle_{M}=\downarrow^{L}_{\Lambda}\langle V,Y\rangle_{M}.\] \label{pairprop}
\end{lem}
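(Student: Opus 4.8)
The plan is to verify each assertion by linearity on the natural basis vectors $\mathfrak{e}_{\gamma}$ and their duals $\mathfrak{e}_{\gamma}^{*}$, once the relevant identifications of discriminant forms are pinned down. Writing $H:=L/\Lambda\subseteq D_{\Lambda}$ for the isotropic subgroup attached to the over-lattice $L$, so that $D_{L}=H^{\perp}/H$ with $H^{\perp}$ taken in $D_{\Lambda}$, I would first record three elementary facts. Passing to $(-1)$-twists keeps $H$ isotropic in $D_{\Lambda(-1)}=D_{\Lambda}(-1)$, so $L(-1)$ is the over-lattice of $\Lambda(-1)$ attached to the same $H$. Similarly $L\oplus M(-1)$ is the over-lattice of $\Lambda\oplus M(-1)$ attached to $H\oplus 0\subseteq D_{\Lambda}\oplus D_{M(-1)}$, whose orthogonal complement in $D_{\Lambda}\oplus D_{M(-1)}$ is $H^{\perp}\oplus D_{M(-1)}$. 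Consequently, under the identification $D_{L\oplus M(-1)}=D_{L}\oplus D_{M(-1)}$, the operator $\uparrow^{L\oplus M(-1)}_{\Lambda\oplus M(-1)}$ acts as $\uparrow^{L}_{\Lambda}$ on the first tensor factor and as the identity on the $\mathbb{C}[D_{M(-1)}]$-factor, and likewise $\downarrow^{L\oplus M(-1)}_{\Lambda\oplus M(-1)}$ acts as $\downarrow^{L}_{\Lambda}$ on the first factor and as the identity on the second; this is immediate from Equations \eqref{uparrowdef} and \eqref{downarrowdef} once one notes that the defining conditions constrain only the $D_{\Lambda}$-coordinate.

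For the adjointness identity $\langle\downarrow^{L}_{\Lambda}X,U\rangle_{L}=\langle X,\uparrow^{L(-1)}_{\Lambda(-1)}U\rangle_{\Lambda}$ I would evaluate on $X=\mathfrak{e}_{\delta}$ with $\delta\in D_{\Lambda}$ and $U=\mathfrak{e}_{\gamma}^{*}$ with $\gamma\in D_{L}=H^{\perp}/H$. By Equation \eqref{downarrowdef} the left side is $1$ when $\delta\in H^{\perp}$ and $\delta+H=\gamma$ and $0$ otherwise, while by Equation \eqref{uparrowdef} the right side is $\big\langle\mathfrak{e}_{\delta},\sum_{\epsilon\in H^{\perp},\,\epsilon+H=\gamma}\mathfrak{e}_{\epsilon}^{*}\big\rangle_{\Lambda}$, which is $1$ exactly when $\delta$ occurs as a summation index --- the same condition. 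This is precisely the statement, implicit in Remark \ref{innprod}, that $\uparrow^{L(-1)}_{\Lambda(-1)}$ is the transpose of $\downarrow^{L}_{\Lambda}$ with respect to the dual pairings.

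For the chain $\langle\langle V,W\rangle_{M},U\rangle_{L}=\langle W,U\otimes V\rangle_{L\oplus M(-1)}=\langle V,\langle W,U\rangle_{L}\rangle_{M}$ I would again evaluate on basis vectors $V=\mathfrak{e}_{\delta_{0}}$, $W=\mathfrak{e}_{\gamma}\otimes\mathfrak{e}_{\delta}^{*}$, and $U=\mathfrak{e}_{\gamma_{0}}^{*}$, using the identification $D_{(L\oplus M(-1))(-1)}=D_{L(-1)}\oplus D_{M}$ under which $U\otimes V=\mathfrak{e}_{\gamma_{0}}^{*}\otimes\mathfrak{e}_{\delta_{0}}$ pairs with $W$ factor by factor. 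Carrying out the contractions in each of the three orders yields the same answer, namely the product of the Kronecker conditions $\gamma=\gamma_{0}$ and $\delta=\delta_{0}$; more conceptually, all three expressions are the single contraction of a pure tensor in $\mathbb{C}[D_{M}]\otimes\mathbb{C}[D_{L}]\otimes\mathbb{C}[D_{M(-1)}]\otimes\mathbb{C}[D_{L(-1)}]$ against its dual, which does not depend on the order in which the $M$- and $L$-factors are contracted. Finally, the two commutation identities $\langle V,\uparrow^{L\oplus M(-1)}_{\Lambda\oplus M(-1)}W\rangle_{M}=\uparrow^{L}_{\Lambda}\langle V,W\rangle_{M}$ and $\langle V,\downarrow^{L\oplus M(-1)}_{\Lambda\oplus M(-1)}Y\rangle_{M}=\downarrow^{L}_{\Lambda}\langle V,Y\rangle_{M}$ follow by combining the observation of the first paragraph --- that the arrow operators act only on the $L$- or $\Lambda$-tensor factor, which is disjoint from the $M$-factor contracted by $\langle V,\cdot\rangle_{M}$ --- with the same basis bookkeeping: the $M$-contraction leaves the $D_{L}$- (resp.\ $D_{\Lambda}$-) factor untouched, so operators acting on disjoint factors may be interchanged. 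I do not anticipate a genuine obstacle here; the only point demanding care is keeping the discriminant-form identifications for over-lattices of direct sums and of $(-1)$-twists consistent, since it is exactly this that makes the arrow operators act on a single tensor factor.
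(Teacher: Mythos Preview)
Your proposal is correct and follows exactly the approach of the paper: reduce by linearity to standard basis vectors and verify the identities directly from the definitions of the pairing and the arrow operators. The paper's proof is a single sentence to this effect, whereas you have simply written out the basis-vector bookkeeping in detail (including the useful preliminary observation that the arrow operators for $\Lambda\oplus M(-1)\subseteq L\oplus M(-1)$ act only on the first tensor factor).
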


\begin{proof}
Verifying all the statements reduces to the case where all vectors involved are standard basis vectors, where the equalities follow immediately from the definition of the pairing and of the arrow operators. This proves the lemma.
\end{proof}

\begin{rmk}
The constant vector from Lemma \ref{decomsum} corresponds, under the identification of $\mathbb{C}[D_{M}]\otimes\mathbb{C}[D_{M(-1)}]$ with $\operatorname{End}\big(\mathbb{C}[D_{M}]\big)$ via the duality, to $\operatorname{Id_{\mathbb{C}[D_{M}]}}$. This is equivalent to the fact that its pairing with every $V\in\mathbb{C}[D_{M}]$ yields back $V$ again, and explains why it indeed must be invariant under $\rho_{M}\otimes\rho_{M}^{*}$, as seen in the proof of Theorem \ref{modcompTheta}. This property can combine with Lemmas \ref{decomsum} and \ref{downTheta}, via the last equality from Lemma \ref{pairprop}, to deduce Proposition \ref{sepTheta} from Proposition \ref{pairTheta}. \label{IdinEnd}
\end{rmk}

\smallskip

We can use the equalities from Lemma \ref{pairprop} for expressing a pairing of a vector with a theta function, in the decomposable setting, as follows.
\begin{prop}
Let $M$ be a primitive non-degenerate sub-lattice of the lattice $L$, and assume that the element $v\in\operatorname{Gr}(L_{\mathbb{R}})$ is the direct sum of $u\in\operatorname{Gr}(M_{\mathbb{R}})$ and $u^{\perp}\in\operatorname{Gr}(M_{\mathbb{R}}^{\perp})$ as defined above. Take $\tau\in\mathcal{H}$, vectors $\alpha$ and $\beta$ in $L_{\mathbb{R}}$, and a polynomial $p_{v}$ that is the product of polynomials $p_{u}$ and $p_{u^{\perp}}$ as above. Then, given a vector $U\in\mathbb{C}[D_{L(-1)}]$, the pairing $\big\langle\Theta_{L}\big(\tau;\binom{\alpha}{\beta};v,p_{v}\big),U\big\rangle_{L}$ with the generalized theta function from Equation \eqref{Thetagen} equals both \[\Bigg\langle\Theta_{M}\bigg(\tau;\binom{\alpha_{M_{\mathbb{R}}}}{\beta_{M_{\mathbb{R}}}};u,p_{u}\bigg)\otimes \Theta_{M^{\perp}_{L}}\bigg(\tau;\binom{\alpha_{M_{\mathbb{R}}^{\perp}}}{\beta_{M_{\mathbb{R}}^{\perp}}};u^{\perp},p_{u^{\perp}}\bigg),\big\uparrow^{L}_{M \oplus M^{\perp}_{L}}U\Bigg\rangle_{L}\] and \[\Bigg\langle\Theta_{M}\bigg(\tau;\binom{\alpha_{M_{\mathbb{R}}}}{\beta_{M_{\mathbb{R}}}};u,p_{u}\bigg),\bigg\langle \Theta_{L,M}\bigg(\tau;\binom{\alpha_{M_{\mathbb{R}}^{\perp}}}{\beta_{M_{\mathbb{R}}^{\perp}}};u^{\perp},p_{u^{\perp}}\bigg),U\bigg\rangle_{L}\Bigg\rangle_{M}.\] \label{exppair}
\end{prop}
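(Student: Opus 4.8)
The plan is to derive both claimed expressions purely formally from the results already established, without returning to the series defining the theta functions. The key inputs are Lemma \ref{dirsum}, which expresses $\Theta_{L}\big(\tau;\binom{\alpha}{\beta};v,p_{v}\big)$ as a tensor product over $M\oplus M^{\perp}_{L}$, Proposition \ref{sepTheta}, which does the same after applying $\downarrow^{L}_{M\oplus M^{\perp}_{L}}$, Proposition \ref{pairTheta}, which expresses $\Theta_{L}$ as a pairing of $\Theta_{M}$ with $\Theta_{L,M}$, and the adjunction and commutation identities from Lemma \ref{pairprop}.

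First I would prove the first expression. By Proposition \ref{sepTheta} we have
\[
\Theta_{L}\big(\tau;\textstyle{\binom{\alpha}{\beta}};v,p_{v}\big)=\big\downarrow^{L}_{M \oplus M^{\perp}_{L}}\!\Big[\Theta_{M}\big(\cdots\big)\otimes\Theta_{M^{\perp}_{L}}\big(\cdots\big)\Big],
\]
so pairing with $U\in\mathbb{C}[D_{L(-1)}]$ and invoking the adjunction $\big\langle\downarrow^{L}_{\Lambda}X,U\big\rangle_{L}=\big\langle X,\uparrow^{L(-1)}_{\Lambda(-1)}U\big\rangle_{\Lambda}$ from Lemma \ref{pairprop} (with $\Lambda=M\oplus M^{\perp}_{L}$) gives exactly the first displayed formula, once we note that $\Lambda(-1)=M(-1)\oplus M^{\perp}_{L}(-1)$ so that $\uparrow^{L(-1)}_{\Lambda(-1)}$ is the operator written as $\uparrow^{L}_{M\oplus M^{\perp}_{L}}$ acting on the dual spaces, and that the pairing $\langle\cdot,\cdot\rangle_{\Lambda}$ on the tensor product is the pairing $\langle\cdot,\cdot\rangle_{L}$ (since $L_{\mathbb{R}}=\Lambda_{\mathbb{R}}$ and $D_{L}$, $D_{\Lambda}$ are the relevant discriminant groups). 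For the second expression I would start instead from Proposition \ref{pairTheta}, which writes $\Theta_{L}\big(\tau;\binom{\alpha}{\beta};v,p_{v}\big)$ as $\big\langle\Theta_{M}(\cdots),\Theta_{L,M}(\cdots)\big\rangle_{M}$, a $\mathbb{C}[D_{L}]$-valued quantity. Pairing this against $U$ over $L$ and applying the identity $\big\langle\langle V,W\rangle_{M},U\big\rangle_{L}=\big\langle V,\langle W,U\rangle_{L}\big\rangle_{M}$ from Lemma \ref{pairprop} (with $V=\Theta_{M}(\cdots)$ and $W=\Theta_{L,M}(\cdots)$) yields precisely the second displayed formula.

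There is essentially no obstacle here; the only point requiring a little care is bookkeeping of which pairing acts on which tensor factor and checking that the arrow operator in the first formula is literally the one named $\uparrow^{L}_{M\oplus M^{\perp}_{L}}$ under the identification $\mathbb{C}[D_{L(-1)}]\cong\mathbb{C}[D_{L}]^{*}$ and $D_{L(-1)}=D_{L}$. One should also remark — as is implicit in the statement — that the equality of the two expressions is then a consequence, reproving the compatibility of Propositions \ref{sepTheta} and \ref{pairTheta} already noted in Remark \ref{IdinEnd}; alternatively one could phrase the proof as: both expressions equal $\big\langle\Theta_{L}\big(\tau;\binom{\alpha}{\beta};v,p_{v}\big),U\big\rangle_{L}$, the first by Proposition \ref{sepTheta} and Lemma \ref{pairprop}, the second by Proposition \ref{pairTheta} and Lemma \ref{pairprop}. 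I would present it in the latter form, so that the proposition is manifestly a two-line consequence of what precedes it.
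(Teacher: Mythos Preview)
Your proposal is correct and follows exactly the paper's own proof: the first expression comes from Proposition \ref{sepTheta} together with the adjunction identity in Lemma \ref{pairprop}, and the second from Proposition \ref{pairTheta} together with the nested-pairing identity in the same lemma. Your additional remarks on the identification of $\uparrow^{L(-1)}_{\Lambda(-1)}$ with $\uparrow^{L}_{M\oplus M^{\perp}_{L}}$ and on the bookkeeping of pairings are appropriate clarifications but do not alter the argument.
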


\begin{proof}
The first formula is a consequence of Proposition \ref{sepTheta} and the first equality in Lemma \ref{pairprop}. For the second formula we use the second equality of the latter lemma, combined with Proposition \ref{pairTheta}. This proves the proposition.
\end{proof}
While the first expression in Proposition \ref{exppair}, which involves simpler operations and resembles visually the formulae that will soon be the theta contractions from \cite{[M]}, \cite{[SW]}, and others, the second expression there will be the more useful one for our purposes. Indeed, as Remark \ref{IdinEnd} shows, the presentation from Proposition \ref{pairTheta} is more fundamental than that of Proposition \ref{sepTheta}.

\smallskip

We now need to introduce some notation. Assume that $M$ is a primitive non-degenerate sub-lattice of the lattice $L$, of signature $(b_{+},b_{-})$. Then the choice of an element $u^{\perp}\in\operatorname{Gr}(M_{\mathbb{R}}^{\perp})$ determines an embedding
\begin{equation}
\iota_{u^{\perp}}:\operatorname{Gr}(M_{\mathbb{R}})\hookrightarrow\operatorname{Gr}(L_{\mathbb{R}}),\qquad \iota_{u^{\perp}}(u)=u \oplus u^{\perp}. \label{embGrML}
\end{equation}
When $M^{\perp}$ is definite, so that $\operatorname{Gr}(M_{\mathbb{R}}^{\perp})$ is trivial, there is a unique choice of $u^{\perp}$, and we identify $\operatorname{Gr}(M_{\mathbb{R}})$ with its image in $\operatorname{Gr}(M_{\mathbb{R}})$ under $\iota_{u^{\perp}}$ for this $u^{\perp}$. In particular, if $M$ is the intersection of $L$ with the subspace perpendicular to a non-isotropic primitive element $\lambda \in L$, and thus $M^{\perp}_{L}=\mathbb{Z}\lambda$, the corresponding subvariety of $\operatorname{Gr}(L_{\mathbb{R}})$ is denoted by $\lambda^{\perp}$, and it is known as a \emph{special divisor}.

We now choose, for every $v\in\operatorname{Gr}(L_{\mathbb{R}})$, a polynomial $p_{v}$ on $L_{\mathbb{R}}$ that is homogenous of degree $(m_{+},m_{-})$ with respect to $v$, such that the map $v \mapsto p_{v}$ is smooth. Let $F:\mathcal{H}\to\mathbb{C}[D_{L(-1)}]$ be a smooth function that transforms like a modular form of weight $\frac{b_{-}-b_{+}}{2}+m_{-}-m_{+}$ and representation $\rho_{L}^{*}$. Lemma \ref{pairMF} and Theorem \ref{modTheta} (with $\alpha=\beta=0$) imply that the function $\tau\mapsto\big\langle\Theta_{L}(\tau;v,p_{v}),F(\tau)\big\rangle_{L}$ is invariant under the action of $\operatorname{Mp}_{2}(\mathbb{Z})$. Recalling that \[\mathcal{F}:=\big\{\tau\in\mathcal{H}\big||x|\leq\tfrac{1}{2},\ |\tau|\geq1\big\}\quad\mathrm{and}\quad\mathcal{F}_{Y}:=\{\tau\in\mathcal{H}|y \leq Y\}\] are the fundamental domain for the action of $\operatorname{Mp}_{2}(\mathbb{Z})$ on $\mathcal{H}$ and its truncation (for $Y>1$), and that $\operatorname{CT}_{s=0}$ stands for the constant term of the Laurent expansion of a holomorphic function of $s\in\mathbb{C}$ at $s=0$, we recall the following result, which is proved in detail in \cite{[B]} for the nearly holomorphic case.
\begin{thm}
For $F$ in a large analytic class of vector-valued modular forms of weight $\frac{b_{-}-b_{+}}{2}+m_{-}-m_{+}$ and representation $\rho_{L}^{*}$, the expression \[\Phi_{L}(F;v,p_{v}):=\operatorname{CT}_{s=0}\lim_{Y\to\infty}\int_{\mathcal{F}_{Y}}\big\langle\Theta_{L}(\tau;v,p_{v}),F(\tau)\big\rangle_{L}y^{-s}\frac{dxdy}{y^{2}}\] defines a function of $v\in\operatorname{Gr}(L_{\mathbb{R}})$, which is smooth outside the union of the images $\lambda^{\perp}$ for non-isotropic vectors $\lambda \in L^{*}$. \label{liftdef}
\end{thm}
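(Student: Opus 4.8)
The plan is to follow the argument of \cite{[B]} (Theorem 6.2 and Section 7), keeping track of the extra homogeneity indices $m_{\pm}$ and of the smooth dependence of $p_{v}$ on $v$, neither of which alters the structure of the proof. As noted before the statement, Theorem \ref{modTheta} (with $\alpha=\beta=0$) and Lemma \ref{pairMF} show that the integrand $\big\langle\Theta_{L}(\tau;v,p_{v}),F(\tau)\big\rangle_{L}y^{-s}$ is $\operatorname{Mp}_{2}(\mathbb{Z})$-invariant up to the factor $y^{-s}$, so the truncated integral over $\mathcal{F}_{Y}$ is well defined for every $s\in\mathbb{C}$ and $Y>1$. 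Since the series in Equation \eqref{Thetadef} converges locally uniformly in $(\tau,v)$ together with all its $v$-derivatives, the contribution of the relatively compact region $\mathcal{F}_{Y_{0}}$ (for a fixed $Y_{0}>1$) is entire in $s$ and smooth in $v$ on all of $\operatorname{Gr}(L_{\mathbb{R}})$; everything therefore reduces to the cuspidal strip $\{|x|\leq\tfrac{1}{2},\ y\geq Y_{0}\}$.

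In that strip I would substitute the Fourier expansion of $F$ --- in the weakly holomorphic case $F=\sum_{\gamma,n}c(\gamma,n)\mathbf{e}(n\tau)\mathfrak{e}_{\gamma}^{*}$ with $n$ bounded below and $|c(\gamma,n)|$ of sub-exponential growth, and more generally a finite sum of such expansions with coefficients polynomial in $1/y$; the only property of the ``large analytic class'' that enters is this shape of the expansion --- and integrate over $x\in[-\tfrac{1}{2},\tfrac{1}{2}]$. The oscillatory factor $\mathbf{e}\big(x\lambda^{2}/2\big)$ coming from Equation \eqref{Thetadef} forces $n=-\lambda^{2}/2$, so the $x$-integral collapses to a sum over $\lambda\in L^{*}$ of terms of the form $(\text{polynomial in }y)\cdot c(\lambda+L,-\lambda^{2}/2)\cdot e^{-\pi y(\lambda_{v}^{2}-\lambda^{2})}$, where $\lambda_{v}^{2}-\lambda^{2}=-2\lambda_{v_{-}}^{2}\geq0$. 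Splitting this sum into the finitely many $\lambda$ (for a given $v$) with $\lambda_{v_{-}}=0$ --- the nonzero such $\lambda$ occurring precisely when $v$ lies on one of the special divisors $\lambda^{\perp}$ --- and the rest, for which the exponential genuinely decays: for the first group the radial integral $\int_{Y_{0}}^{\infty}y^{a-s}\tfrac{dy}{y}=\tfrac{Y_{0}^{\,a-s}}{s-a}$ is elementary and meromorphic in $s$, while for the second group $\int_{Y_{0}}^{\infty}(\cdots)e^{-\pi y(\lambda_{v}^{2}-\lambda^{2})}y^{-s}\tfrac{dy}{y}$ is entire in $s$ and the sum over $\lambda$ converges locally uniformly, because $\lambda_{v}^{2}$ grows like a positive definite quadratic form in $\lambda$ while $|c(\lambda+L,-\lambda^{2}/2)|$ grows sub-exponentially. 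Hence $\lim_{Y\to\infty}\int_{\mathcal{F}_{Y}}$ exists for $\operatorname{Re}(s)$ large, extends to a function of $s$ meromorphic near $s=0$, and $\Phi_{L}(F;v,p_{v})$ is its constant term there.

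For the smoothness assertion, fix a compact set $K$ inside $\operatorname{Gr}(L_{\mathbb{R}})\setminus\bigcup_{\lambda}\lambda^{\perp}$, the union running over non-isotropic $\lambda\in L^{*}$ with $c(\lambda+L,-\lambda^{2}/2)\neq0$. On $K$ the ``first group'' above reduces to the single term $\lambda=0$, whose contribution is an explicit elementary function of $s$ with coefficients depending real-analytically on $v$ (through $p_{v}$ and $\Delta_{v}$); and on $K$ the exponents $\lambda_{v}^{2}-\lambda^{2}$ occurring in the ``second group'' are bounded below by a positive constant uniformly in the relevant $\lambda$ (a small value of $\lambda_{v_{-}}^{2}$ on the compact $K$ would force $\lambda$ bounded, hence $v$ close to $\lambda^{\perp}$), so that this part is a locally uniformly convergent sum of functions real-analytic in $(s,v)$. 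Taking the constant term at $s=0$ and adding the smooth $\mathcal{F}_{Y_{0}}$-contribution shows that $\Phi_{L}(F;v,p_{v})$ is smooth on $K$, hence on all of $\operatorname{Gr}(L_{\mathbb{R}})\setminus\bigcup_{\lambda}\lambda^{\perp}$.

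I expect the main obstacle to be the analytic continuation in $s$ across $\operatorname{Re}(s)=0$: one must isolate, uniformly in $v$, exactly the polynomial-in-$y$ part of $\int_{-1/2}^{1/2}\langle\Theta_{L},F\rangle\,dx$ that causes the divergence --- here the $v$-dependent polynomial $p_{v}$ forces one to track which powers of $1/y$ are produced by $e^{-\Delta_{v}/8\pi y}$ --- and then to verify that the remainder decays exponentially at a rate locally uniform in $v$ away from the special divisors. Once this bookkeeping is in place, the rest (interchanging summation and integration for $\operatorname{Re}(s)\gg0$, differentiating under the integral sign, and reading off the singular locus) is routine and parallels \cite{[B]}.
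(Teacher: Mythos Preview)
The paper does not supply its own proof of this theorem; it is explicitly introduced as a result ``proved in detail in \cite{[B]} for the nearly holomorphic case,'' and the surrounding discussion merely explains the name and the singular locus. Your proposal is a faithful sketch of exactly that argument from Sections~6--7 of \cite{[B]}, so in this sense you have reproduced what the paper cites rather than diverged from it.

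One small remark: your sketch is tailored to the weakly holomorphic (or polynomial-in-$1/y$) situation, in which the non-decaying terms after the $x$-integration are those with $\lambda_{v_{-}}=0$, forcing $\lambda^{2}>0$. The paper states the theorem for a deliberately vague ``large analytic class'' and notes afterwards that for weak Maa\ss\ forms singularities along $\lambda^{\perp}$ with $\lambda^{2}<0$ can also appear; these come from the non-holomorphic part of $F$, whose Fourier expansion involves genuinely non-polynomial functions of $y$ (incomplete Gamma functions) rather than the shape you assumed. Your argument extends to that case with the obvious modification---one tracks the additional exponential $e^{2\pi y\lambda_{v_{+}}^{2}}$ arising from anti-holomorphic Fourier modes, and the non-decaying terms are then those with $\lambda_{v_{+}}=0$---but as written your ``large analytic class'' is narrower than the paper's. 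This is not a gap so much as a scope mismatch inherited from the paper's own imprecision.
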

The function $\Phi_{L}$ from Theorem \ref{liftdef}, which is defined wherever the limit in $Y$ produces a holomorphic function of $s$ in a right half-plane which admits a meromorphic continuation to $s\in\mathbb{C}$, is called the \emph{(regularized) theta lift} of $F$ with respect to the function $v \mapsto p_{v}$. The local smoothness of the theta lift depends on the smoothness of the latter function. In the case of nearly holomorphic $F$, presented in Section 6 of \cite{[B]}, the only singularities are along $\lambda^{\perp}$ for $\lambda^{2}>0$ (note that Theorem \ref{liftdef} is defined using the pairing $\langle\cdot,\cdot\rangle_{L}$ while \cite{[B]} works with the pairing from Remark \ref{innprod}, whence the different sign here), but other modular forms, like weak Maa\ss\ forms, may produce singularities along the other $\lambda^{\perp}$'s as well.

\smallskip

We can now state and prove our main result, about restricting theta lifts.
\begin{thm}
Let $M$ be a primitive non-degenerate sub-lattice of $L$, of signature $(c_{+},c_{-})$, and consider $\operatorname{Gr}(M_{\mathbb{R}})$ as embedded in $\operatorname{Gr}(L_{\mathbb{R}})$ via the map $\iota_{u^{\perp}}$ from Equation \eqref{embGrML}, associated with the element $u^{\perp}\in\operatorname{Gr}(M_{\mathbb{R}}^{\perp})$. Assume that for every $v$ in that image, the polynomial $p_{v}$ is the product $p_{u}p_{u^{\perp}}$, where the polynomial $p_{u^{\perp}}$, which is homogenous of degree $(m_{+}-n_{+},m_{-}-n_{-})$ with respect to $u^{\perp}$, is constant along the sub-Grassmannian $\iota_{u^{\perp}}\big(\operatorname{Gr}(M_{\mathbb{R}})\big)$. Then the function \[\Theta_{(L,M)}(F;u^{\perp},p_{u^{\perp}}):\tau\mapsto\big\langle\Theta_{L,M}(\tau;u^{\perp},p_{u^{\perp}}),F(\tau)\big\rangle_{L}\] is modular of weight $\frac{c_{-}-c_{+}}{2}+n_{-}-n_{+}$ and representation $\rho_{M}^{*}$, and for every $v=u \otimes u^{\perp} \in \iota_{u^{\perp}}\big(\operatorname{Gr}(M_{\mathbb{R}})\big)$ we have the equality \[\Phi_{L}(F;v,p_{v})=\Phi_{M}\big(\Theta_{(L,M)}(F;u^{\perp},p_{u^{\perp}});u,p_{u}\big).\] \label{reslift}
\end{thm}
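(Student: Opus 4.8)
The plan is to deduce both assertions directly from the material of Section \ref{DecTheta}; the key point is that the $\operatorname{Mp}_{2}(\mathbb{Z})$-invariant integrand defining $\Phi_{L}$ at a point of the embedded sub-Grassmannian is, as a function of $\tau$, literally the integrand defining a theta lift over $M$.

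First I would establish the modularity of $\Theta_{(L,M)}(F;u^{\perp},p_{u^{\perp}})$. Theorem \ref{modcompTheta} says that $\tau\mapsto\Theta_{L,M}(\tau;u^{\perp},p_{u^{\perp}})$ transforms like a modular form of weight $\frac{(b_{+}-c_{+})-(b_{-}-c_{-})}{2}+(m_{+}-n_{+})-(m_{-}-n_{-})$ and representation $\rho_{L}\otimes\rho_{M}^{*}$, while by hypothesis $F$ transforms like a modular form of weight $\frac{b_{-}-b_{+}}{2}+m_{-}-m_{+}$ and representation $\rho_{L}^{*}$. Applying Lemma \ref{pairMF} with the lattice there taken to be $L$ — so that the extra representation factor is $\rho_{M(-1)}=\rho_{M}^{*}$ on the $\Theta_{L,M}$ side and trivial on the $F$ side, and $\langle\cdot,\cdot\rangle_{L}$ pairs $\mathbb{C}[D_{L}]$ against $\mathbb{C}[D_{L(-1)}]$ — I would conclude that $\tau\mapsto\langle\Theta_{L,M}(\tau;u^{\perp},p_{u^{\perp}}),F(\tau)\rangle_{L}$ transforms like a modular form whose weight is the sum of the two, which collapses to $\frac{c_{-}-c_{+}}{2}+n_{-}-n_{+}$, and whose representation is $\rho_{M}^{*}$, as claimed. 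By hypothesis the smooth family $v\mapsto p_{v}$ restricts along $\iota_{u^{\perp}}(\operatorname{Gr}(M_{\mathbb{R}}))$ to $u\mapsto p_{u}p_{u^{\perp}}$ with $p_{u^{\perp}}$ independent of $u$, so $u\mapsto p_{u}$ is itself a smooth family of polynomials homogeneous of degree $(n_{+},n_{-})$ with respect to $u$, and therefore $\Phi_{M}(\Theta_{(L,M)}(F;u^{\perp},p_{u^{\perp}});u,p_{u})$ is a legitimate expression in the sense of Theorem \ref{liftdef}.

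Next I would prove the identity of the lifts. Fixing $v=u\oplus u^{\perp}$ in the image of $\iota_{u^{\perp}}$, so that $p_{v}=p_{u}p_{u^{\perp}}$, Proposition \ref{pairTheta} with $\alpha=\beta=0$ gives $\Theta_{L}(\tau;v,p_{v})=\langle\Theta_{M}(\tau;u,p_{u}),\Theta_{L,M}(\tau;u^{\perp},p_{u^{\perp}})\rangle_{M}$. Pairing both sides against $F(\tau)$ via $\langle\cdot,\cdot\rangle_{L}$ and applying the associativity relation $\langle\langle V,W\rangle_{M},U\rangle_{L}=\langle V,\langle W,U\rangle_{L}\rangle_{M}$ of Lemma \ref{pairprop}, with $V=\Theta_{M}(\tau;u,p_{u})\in\mathbb{C}[D_{M}]$, $W=\Theta_{L,M}(\tau;u^{\perp},p_{u^{\perp}})\in\mathbb{C}[D_{L}]\otimes\mathbb{C}[D_{M(-1)}]$, and $U=F(\tau)\in\mathbb{C}[D_{L(-1)}]$, produces
\[\big\langle\Theta_{L}(\tau;v,p_{v}),F(\tau)\big\rangle_{L}=\Big\langle\Theta_{M}(\tau;u,p_{u}),\big\langle\Theta_{L,M}(\tau;u^{\perp},p_{u^{\perp}}),F(\tau)\big\rangle_{L}\Big\rangle_{M}=\big\langle\Theta_{M}(\tau;u,p_{u}),\Theta_{(L,M)}(F;u^{\perp},p_{u^{\perp}})(\tau)\big\rangle_{M}.\]
Hence the function of $\tau$ integrated in the definition of $\Phi_{L}(F;v,p_{v})$ in Theorem \ref{liftdef} is precisely the one integrated in the definition of $\Phi_{M}(\Theta_{(L,M)}(F;u^{\perp},p_{u^{\perp}});u,p_{u})$, so performing the regularization $\operatorname{CT}_{s=0}\lim_{Y\to\infty}\int_{\mathcal{F}_{Y}}(\cdots)y^{-s}\frac{dxdy}{y^{2}}$ on it yields the asserted equality and, at the same time, shows that the two sides are defined on the same locus.

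Most of the work will be bookkeeping: making the weights, the homogeneity bidegrees $(n_{\pm})$ and $(m_{\pm}-n_{\pm})$, and the representations $\rho_{L}$, $\rho_{M}$, $\rho_{M}^{*}$ line up, and checking that at a point of $\iota_{u^{\perp}}(\operatorname{Gr}(M_{\mathbb{R}}))$ the hypotheses of Proposition \ref{pairTheta} and of the associativity identity in Lemma \ref{pairprop} are genuinely in force. The one place I expect to need careful phrasing rather than computation is the analytic matter of whether $F$ belonging to the ``large analytic class'' of Theorem \ref{liftdef} for $L$ forces $\Theta_{(L,M)}(F;u^{\perp},p_{u^{\perp}})$ into the corresponding class for $M$; the observation that the two regularized integrals are built from one and the same $\tau$-function renders this automatic, but the equality should be read as holding on the common domain of definition — which contains, in particular, the nearly holomorphic setting with $v$ off the relevant special divisors.
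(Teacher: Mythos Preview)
Your proof is correct and follows essentially the same route as the paper: modularity via Theorem \ref{modcompTheta} plus Lemma \ref{pairMF}, and the identity of lifts via Proposition \ref{pairTheta} combined with the associativity relation $\langle\langle V,W\rangle_{M},U\rangle_{L}=\langle V,\langle W,U\rangle_{L}\rangle_{M}$ of Lemma \ref{pairprop} applied to the integrand. Your additional remarks on the smoothness of $u\mapsto p_{u}$ and on the common domain of definition of the two regularized integrals are reasonable elaborations, but the core argument matches the paper's exactly.
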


\begin{proof}
The modularity of $\Theta_{(L,M)}(F;u^{\perp},p_{u^{\perp}})$ follows directly from Theorem \ref{modcompTheta} and Lemma \ref{pairMF}. For the second one, Proposition \ref{pairTheta} expresses the pairing inside the integrand in the definition of $\Phi_{L}(F;v,p_{v})$ as the left hand side of the second equality in Lemma \ref{pairprop}, with $V=\Theta_{M}(\tau;u,p_{u})$, $W=\Theta_{L,M}(\tau;u^{\perp},p_{u^{\perp}})$, and $U=F(\tau)$, for each $\tau\in\mathcal{H}$. But transferring this expression to the form of the right hand side of that equality, we obtain the pairing inside the integrand of $\Phi_{M}\big(\Theta_{(L,M)}(F;u^{\perp},p_{u^{\perp}});u,p_{u}\big)$. Since the remaining parts of the evaluation of these theta lifts are independent of the choice of lattice, they produce the same value. This proves the theorem.
\end{proof}
The function $\Theta_{(L,M)}(F;u^{\perp},p_{u^{\perp}})$ is call the \emph{theta contraction} of $F$ (associated with $u^{\perp}$ and $p_{u^{\perp}}$), and thus Theorem \ref{reslift} means that the restriction of the theta lift of $F$ to $\operatorname{Gr}(M_{\mathbb{R}})$ (embedded into $\operatorname{Gr}(M_{\mathbb{R}})$ via $\iota_{u^{\perp}}$) is the theta lift of the theta contraction of $F$. Note, however, that given a point $u$ in $\operatorname{Gr}(M_{\mathbb{R}})$ near which the theta lift $\Phi_{M}$ is smooth, the theta lift $\Phi_{L}$ need not be smooth near $v=\iota_{u^{\perp}}(u)=u \otimes u^{\perp}$. Indeed, this point $v$ can lie in some sub-Grassmannian of the form $\lambda^{\perp}$ for an anisotropic element $\lambda\in(M^{\perp}_{L})^{*}$ along which $\Phi_{L}$ might be singular, and which will then contain all of $\iota_{u^{\perp}}\big(\operatorname{Gr}(M_{\mathbb{R}})\big)$. Therefore if one wishes to obtain $\Phi_{M}$ as a restriction of a function $\widetilde{\Phi}_{L}$ to a sub-manifold which is not fully contained in the singularities of $\widetilde{\Phi}_{L}$, one may obtain $\widetilde{\Phi}_{L}$ by subtracting from $\Phi_{L}$ all the singularities arising from $\lambda^{\perp}$ for anisotropic $\lambda\in(M^{\perp}_{L})^{*}$. This is indeed possible in many interesting cases, as we see below.

\smallskip

We present a few basic applications, on complex Grassmannians. When $b_{-}=2$, the real $2b_{+}$-dimensional Grassmannian $\operatorname{Gr}(L_{\mathbb{R}})$ is, canonically up to orientations of complex conjugation, a complex manifold of dimension $b_{+}$ (see, e.g., Section 13 of \cite{[B]} or Section 3 of \cite{[BZ]} for the details). Taking the lattice $M$ to be with $c_{-}=2$ as well, so that $M^{\perp}_{L}$ is positive definite, the image of $\operatorname{Gr}(M_{\mathbb{R}})$ in $\operatorname{Gr}(L_{\mathbb{R}})$ is well-defined (i.e., there is no choice of $u^{\perp}$), and $n_{-}=m_{-}$, the sub-Grassmannian $\operatorname{Gr}(M_{\mathbb{R}})$ is a complex sub-manifold of $\operatorname{Gr}(L_{\mathbb{R}})$. Assuming that $m_{+}=n_{+}=0$ and $p_{u^{\perp}}=1$, the theta functions $\Theta_{M^{\perp}_{L}}$ and $\Theta_{L,M}$ are holomorphic and have integral Fourier coefficients, and we shorthand the notation for the theta contraction $\Theta_{(L,M)}(F;u^{\perp},p_{u^{\perp}})$ of a modular form $F$, whose weight is now just $1-\frac{b_{+}}{2}+m_{-}$, to simply $\Theta_{(L,M)}(F)$, of weight $1-\frac{c_{+}}{2}+m_{-}$.

In the case $p_{v}=p_{u}=1$, with $m_{-}=1$, and $F$ weakly holomorphic with integral Fourier coefficients, Theorem 13.3 of \cite{[B]} shows that $\Phi_{L}$ is, up to some globally smooth expressions of moderate growth, the logarithm of a meromorphic automorphic form $\Psi_{L}$ with respect to $\Gamma_{L}$ on $\operatorname{Gr}(L_{\mathbb{R}})$, called the \emph{Borcherds product} associated with $F$. Moreover, the (logarithmic) singularities of $\Phi_{L}$ determine the divisor of $\Psi_{L}$. Then $\Theta_{(L,M)}(F)$ is also weakly holomorphic with integral Fourier coefficients, and thus $\Phi_{M}$ is the logarithm of the Borcherds product $\Psi_{M}$ that is associated with $F$. Now, the singularities of $\Psi_{L}$ that cover $\operatorname{Gr}(M_{\mathbb{R}})$ completely are related to the zeros and poles of $\Phi_{L}$ that cover it, and subtracting the singularities of the former corresponds, under exponentiation, to the division by linear functions appearing in the definition of the quasi-pullback in \cite{[M]}. Therefore this case of Theorem \ref{reslift} reproduces the main theorem of \cite{[M]}, also for the case where the Koecher principle no longer holds.

Consider now the case where $p_{v}=p_{u}$ is the polynomial $(X+iY)^{m_{-}}$ for an oriented, orthogonal, equi-normed basis for $v_{-}$ (this is defined only on the tautological bundle of the complex manifold $\operatorname{Gr}(L_{\mathbb{R}})$, but this problem can be solved using sections or the definition of automorphic forms using this bundle). If $F$ is still weakly holomorphic (hence so is $\Theta_{(L,M)}(F)$ again), then Theorem 14.3 of \cite{[B]} states that $\Phi_{L}$ itself is a meromorphic automorphic form of weight $m$ with respect to $\Gamma_{L}$, and it has poles of order $m$ along the sub-Grassmannians $\lambda^{\perp}$ for some $\lambda \in L^{*}$ with positive norm, which are now complex divisors. The same assertion holds for $\Phi_{M}$ arising from $\Theta_{(L,M)}(F)$, and Theorem \ref{reslift} implies that if we subtract all the polar parts of $\Phi_{L}$ that cover $\operatorname{Gr}(M_{\mathbb{R}})$ completely, the restriction of the resulting function to $\operatorname{Gr}(M_{\mathbb{R}})$ is just $\Phi_{M}$. This generalizes some known results about true restrictions of holomorphic lifts of cusp forms, which appear (when $c_{-}=b_{-}-1$) \cite{[V1]} and \cite{[SW]}, and more generally \cite{[W2]}, to arbitrary dimensions and to meromorphic automorphic forms.

Recalling that when $c_{-}=1$ the Grassmannian $\operatorname{Gr}(M_{\mathbb{R}})$ becomes a real hyperbolic space of dimension $c_{+}$. It may be interesting to see what kind of functions do the restrictions of the meromorphic automorphic forms of weight $m$ from the previous paragraph, or the logarithm of Borcherds products from the preceding one, produce on such a hyperbolic space, and its variation with $u^{\perp}$ (which lies in another hyperbolic space, of dimension $b_{+}-c_{+}$).

\smallskip

Let us now make the operation of theta contraction from Theorem \ref{reslift} more explicit, using the natural bases for Weil representations. Consider a lattice $L$ and a primitive non-degenerate sub-lattice $M$, and viewing $L$ as an over-lattice of $\Lambda:=M \oplus M^{\perp}_{L}$, the quotient $H=L/\Lambda$ is an isotropic subgroup of $D_{\Lambda}=D_{M} \oplus D_{M^{\perp}_{L}}$. As stated in \cite{[M]} (and already proved in \cite{[N]}), the projections of $H$ onto both components $D_{M}$ and $D_{M^{\perp}_{L}}$ are injective (this is just the fact that $M$ and $M^{\perp}_{L}$ are primitive in $L$), and if we write their images as $H_{M}$ and $H_{M^{\perp}}$ respectively, then $H_{M^{\perp}} \cong H_{M}(-1)$ as (possibly degenerate) finite groups with $\mathbb{Q}/\mathbb{Z}$-valued bilinear and quadratic forms (this is equivalent to $H$ itself being isotropic).

We will examine the case where $H_{M}$ and $H_{M^{\perp}}$ are non-degenerate, to explain how this yields the formulae for theta contractions from \cite{[M]}, but generalized to our setting. This case includes the situations where $L=\Lambda$, or when one of the lattices is unimodular. The non-degeneracy yields decompositions \[D_{M}=H_{M} \oplus H_{M}^{\perp}\quad\mathrm{and}\quad D_{M^{\perp}_{L}}=H_{M^{\perp}} \oplus H_{M^{\perp}}^{\perp},\] and inside $D_{\Lambda}=D_{M} \oplus D_{M^{\perp}_{L}}$ we have
\begin{equation}
H^{\perp}=H_{M}^{\perp} \oplus H_{M^{\perp}}^{\perp} \oplus H\quad\mathrm{and\ thus}\quad D_{L} \cong H_{M}^{\perp} \oplus H_{M^{\perp}}^{\perp}. \label{Ddecom}
\end{equation}
In all the following formulae we suppress the variables $\tau$, $v$, $u$, $u^{\perp}$, $p_{v}$, $p_{u}$, $p_{u^{\perp}}$, $\alpha$, $\beta$, and their projections, but assume that the usual conditions $v=u \oplus u^{\perp}$ and $p_{v}=p_{u}p_{u^{\perp}}$ are satisfied. This allows us to write the theta functions $\Theta_{M}$ and $\Theta_{M^{\perp}_{L}}$ as
\begin{equation}
\sum_{\alpha \in H_{M}^{\perp}}\sum_{\gamma_{M} \in H_{M}}\theta_{M+\alpha+\gamma_{M}}\mathfrak{e}_{\alpha}\otimes\mathfrak{e}_{\gamma_{M}}\mathrm{\ and\ }\sum_{\beta \in H_{M^{\perp}}^{\perp}}\sum_{\gamma_{M^{\perp}} \in H_{M^{\perp}}}\theta_{M^{\perp}+\beta+\gamma_{M^{\perp}}}\mathfrak{e}_{\beta}\otimes\mathfrak{e}_{\gamma_{M^{\perp}}}, \label{MMperpcoor}
\end{equation}
and we combine the corresponding expression for $\Theta_{L}$ with Proposition \ref{sepTheta}, via the definition of $\downarrow^{L}_{\Lambda}$ and the expression for $H^{\perp}$ in Equation \eqref{Ddecom}. This gives
\begin{equation}
\theta_{L+\alpha+\beta}=\sum_{\gamma \in H}\theta_{M+\alpha+\gamma_{M}}\theta_{M^{\perp}+\beta+\gamma_{M^{\perp}}} \label{expthetaL}
\end{equation}
for every $\alpha \in H_{M}^{\perp}$ and $\beta \in H_{M^{\perp}}^{\perp}$, where for $\gamma \in H$ the notations $\gamma_{M}$ and $\gamma_{M^{\perp}}$ stand for the images of $\gamma$ in $H_{M}$ and $H_{M^{\perp}}$ respectively. The case of trivial $H$ in Equation \eqref{expthetaL} is the description in coordinates of Lemma \ref{dirsum}.

Now, Equation \eqref{MMperpcoor} and Lemma \ref{decomsum} imply that $\Theta_{\Lambda,M}$ is
\[\sum_{\beta \in H_{M^{\perp}}^{\perp}}\sum_{\gamma_{M^{\perp}} \in H_{M^{\perp}}}\sum_{\alpha \in H_{M}^{\perp}}\sum_{\gamma_{M} \in H_{M}}\theta_{M^{\perp}+\beta+\gamma_{M^{\perp}}}\mathfrak{e}_{\alpha}\otimes\mathfrak{e}_{\gamma_{M}}\otimes\mathfrak{e}_{\beta}\otimes\mathfrak{e}_{\gamma_{M^{\perp}}}\otimes \mathfrak{e}_{\alpha}^{*}\otimes\mathfrak{e}_{\gamma_{M}}^{*}\] (with $\gamma_{M}$ and $\gamma_{M^{\perp}}$ being independent indices), and Lemma \ref{Thetadown} then yields
\begin{equation}
\Theta_{L,M}=\sum_{\alpha \in H_{M}^{\perp}}\sum_{\beta \in H_{M^{\perp}}^{\perp}}\sum_{\gamma \in H}\theta_{M^{\perp}+\beta+\gamma_{M^{\perp}}}\mathfrak{e}_{\alpha}\otimes\mathfrak{e}_{\beta}\otimes\mathfrak{e}_{\alpha}^{*}\otimes\mathfrak{e}_{\gamma_{M}}^{*} \label{ThetaLMexp}
\end{equation}
(where $\gamma_{M}$ and $\gamma_{M^{\perp}}$ become the projections of $\gamma$ again). If the expansion of the modular form $F$, of weight $\frac{b_{-}-b_{+}}{2}+m_{-}-m_{+}$ and representation $\rho_{L}^{*}$, is as $\sum_{\alpha \in H_{M}^{\perp}}\sum_{\beta \in H_{M^{\perp}}^{\perp}}f_{L+\alpha+\beta}\mathfrak{e}_{\alpha}^{*}\otimes\mathfrak{e}_{\beta}^{*}$ (with $\tau$ suppressed again), then using Equation \eqref{ThetaLMexp} we get
\begin{equation}
\Theta_{(L,M)}(F)=\sum_{\alpha \in H_{M}^{\perp}}\sum_{\beta \in H_{M^{\perp}}^{\perp}}\sum_{\gamma \in H}f_{L+\alpha+\beta}\theta_{M^{\perp}+\beta+\gamma_{M^{\perp}}}\mathfrak{e}_{\alpha}^{*}\otimes\mathfrak{e}_{\gamma_{M}}^{*}. \label{contexp}
\end{equation}
Equation \eqref{contexp} generalizes Equation (3.7) from Example 3.11 of \cite{[M]}, and its special case of trivial $H$ yields, in particular, Equation (3.1) of that reference.

We consider a few simpler special cases. First, if $M$ is unimodular (so that in particular $H=\{0\}$), then Equation \eqref{ThetaLMexp} (or just Lemma \ref{decomsum}) identifies $\Theta_{L,M}$ with $\Theta_{M^{\perp}}$, and Equation \eqref{expthetaL} (or Lemma \ref{dirsum}) yields $\Theta_{L}=\theta_{M}\cdot\Theta_{M^{\perp}_{L}}$. More interesting is the case from Example 3.12 of \cite{[M]}, where $H$ surjects onto $D_{M^{\perp}_{L}}$, so that $H_{M^{\perp}}^{\perp}$ is trivial. Then we can identify the vector $\mathfrak{e}_{\gamma_{M}}^{*}$ for $\gamma_{M} \in H_{M}$ with $\mathfrak{e}_{\gamma_{M^{\perp}}}$ for $\gamma_{M^{\perp}} \in H_{M^{\perp}}=D_{M^{\perp}_{L}}$, and since Equation \eqref{Ddecom} identifies $D_{L}$ with $H_{M^{\perp}}$, Equation \eqref{ThetaLMexp} reduces to \[\Theta_{L,M}=\Theta_{M^{\perp}}\otimes\bigg(\sum_{\alpha \in D_{L}}\mathfrak{e}_{\alpha}\otimes\mathfrak{e}_{\alpha}^{*}\bigg).\] Equation \eqref{contexp} then reduces to the equality $\Theta_{(L,M)}(F)=F\otimes\Theta_{M^{\perp}}$ from Equation (3.8) of \cite{[M]}, which is in correspondence with the properties of the latter vector given in Remark \ref{IdinEnd}. The case where $H$ surjects also on $D_{M}$, i.e., $H_{M}^{\perp}$ is also trivial, is the case where $L$ is unimodular, considered in Example 3.13 of \cite{[M]}. Then $\Theta_{L,M}$ is again identified with $\Theta_{M^{\perp}_{L}}$, $F=f$ is scalar-valued, and $\Theta_{(L,M)}(F)=f\cdot\Theta_{M^{\perp}}$. Finally, if $M^{\perp}$ is unimodular (as in Example 3.14 of \cite{[M]}), then $\Theta_{L,M}$ is the scalar-valued function $\theta_{M^{\perp}_{L}}$ times $\sum_{\alpha \in D_{L}}\mathfrak{e}_{\alpha}\otimes\mathfrak{e}_{\alpha}^{*}$, and $\Theta_{(L,M)}(F)$ is indeed $\theta_{M^{\perp}_{L}} \cdot F$. Since $D_{L}=D_{M}$ is this case, Theorem \ref{reslift} determines the restriction of the lift of $F$ to the image of $\operatorname{Gr}(M_{\mathbb{R}})$ as the lift of $\theta_{M^{\perp}_{L}} \cdot F$.

\noindent\textsc{Einstein Institute of Mathematics, the Hebrew University of Jerusalem, Edmund Safra Campus, Jerusalem 91904, Israel}

\noindent E-mail address: zemels@math.huji.ac.il


\begin{thebibliography}{}{}

\bibitem[B]{[B]} Borcherds, R. E., \textsc{Automorphic Forms with Singularities on Grassmannians}, Invent. Math., vol. 132, 491--562 (1998).
\bibitem[BZ]{[BZ]} Bruinier, J. H., Zemel, S., \textsc{Special Cycles on Toroidal Compactifications of Orthogonal Shimura Varieties}, submitted, https://arxiv.org/abs/1912.11825.
\bibitem[E]{[E]} Ehlen, S., \textsc{Vector Valued Theta Functions Associated with Binary Quadratic Forms}, Forum Math., vol 28 no. 5, 893--908 (2016).
\bibitem[KM]{[KM]} Kudla, S., S., Millson, J. J. \textsc{The Theta Correspondence and Harmonic Forms, I}, Math. Ann., vol. 274 issue 3, 353--378 (1986).
\bibitem[LZ]{[LZ]} Li, Y., Zemel, S., \textsc{Shimura Lift of Weakly Holomorphic Modular Forms}, Math. Z., vol 290, 37--61 (2018).
\bibitem[M]{[M]} Ma, S., \textsc{Quasi--Pullback of Borcherds Products}, Bull. London Math. Soc., vol. 51 issue 6, 1061--1078 (2019).
\bibitem[N]{[N]} Nikulin, V. V., \textsc{Integral symmetric bilinear forms and some of their applications}, Math. USSR Izv., vol 14, 103–167 (1980).
\bibitem[Sch]{[Sch]} Scheithauer, N. R., \textsc{The Weil representation of $SL_{2}(\mathbb{Z})$ and some applications}, Int. Math. Res. Not., no. 8, 1488--1545 (2009).
\bibitem[SW]{[SW]} Schwagenscheidt, M., Williams, B., \textsc{Twisted Component Sums of Vector-Valued Modular Forms}, Abh. Math. Semin. Univ. Hambg., vol 89, 151--168 (2019).
\bibitem[Str]{[Str]} Str\"{o}mberg, F., \textsc{Weil Representations Associated to Finite Quadratic Modules}, Math. Z., vol 275 issue 1, 509--527 (2013).
\bibitem[V1]{[V1]} Viazovska, M., \textsc{Modular Functions and Special Cycles}, Ph.D. thesis, Universit\"{a}t Bonn (2013).
\bibitem[V2]{[V2]} Viazovska, M., \textsc{Petersson Inner Products of Weight-One Modular Forms}, J. Reine. Angew. Math., vol 749, 133--159 (2019).
\bibitem[W1]{[W1]} Williams, B., \textsc{Remarks on the Theta Decomposition of Vector-Valued Jacobi Forms}, J. Number Theory, vol 197, 250--267 (2019).
\bibitem[W2]{[W2]} Williams, B., \textsc{Higher Pullbacks of Modular Forms on Orthogonal Groups}, pre-print, https://arxiv.org/abs/1910.11681 (2019).
\bibitem[Z]{[Z]} Zemel, S., \textsc{A $p$-adic Approach to the Weil Representation of Discriminant Forms Arising from Even Lattices}, Math. Ann. Qu\'{e}bec, vol 39 issue 1, 61--89 (2015).

\end{thebibliography}
\end{document}